\theoremstyle{plain}
\newtheorem{theorem}[equation]{Theorem}
\newtheorem{proposition}[equation]{Proposition}
\newtheorem{lemma}[equation]{Lemma}
\newtheorem{corollary}[equation]{Corollary}
\newtheorem{definition}[equation]{Definition}
\theoremstyle{definition}
\newtheorem{example}{Example}
\newtheorem*{remark}{Remark}
\numberwithin{equation}{section}
\newcommand{\sjump}{\hskip .2 cm}
\newcommand{\dbarstar}{\overline{\partial}^{\star}}
\newcommand{\dbar}{\overline \partial}
\newcommand{\im}{\operatorname{Im}}
\newcommand{\re}{\operatorname{Re}}
\newcommand{\supp}{\operatorname{supp}}
\newcommand{\lcap}{\operatorname{cap}}
\newcommand{\card}{\operatorname{card}}
\newcommand{\dom}{\operatorname{Dom}}
\begin{document}

\title{The closed range property for the $\dbar$-operator on
planar domains}
\author{Gallagher, A.-K.}
\date{\today}
\address{Dept. of Mathematics, Oklahoma State University,
Stillwater, OK, 74074, USA}
\email{anne.g@gallagherti.com}

\author{Lebl, J.}
\address{Dept. of Mathematics, Oklahoma State University,
Stillwater, OK, 74074, USA}
\email{lebl@okstate.edu}

\author{Ramachandran, K.}
\address{TIFR Centre For Applicable Mathematics, Tata Institute
of Fundamental Research,
Bangalore,
Karnataka, 560065, India}
\email{koushik@tifrbng.res.in}
\keywords{$\dbar$, closed range, logarithmic capacity,
Poincar\'{e}--Dirichlet inequality}
\begin{abstract}
  Let $\Omega\subset\mathbb{C}$ be an open set. 
We show that $\dbar$ has closed range in $L^{2}(\Omega)$ if and
only if the
Poincar{\'e}--Dirichlet
inequality holds. Moreover, we give necessary and sufficient
potential-theoretic conditions for
the
$\dbar$-operator to have closed range in $L^{2}(\Omega)$. We
also give a new necessary and
sufficient potential-theoretic condition for the Bergman space
of $\Omega$ to be infinite
dimensional.
\end{abstract}
\maketitle
\section{Introduction}
The purpose of this paper is to understand the closed range
property for the $\dbar$-operator for
open sets in the complex plane. That is, we study a concept,
which is fundamental for the analysis
of holomorphic functions in higher dimensions, in one complex
variable. Because of the tight
relationship of harmonic and holomorphic functions on open sets
in $\mathbb{C}$, many phenomena of
the latter may be explained and derived through potential theory
in the complex plane. Our main
result, Theorem~\ref{T:main}, is another manifestation of this
deep connection. In fact, we
completely describe the closed range property for $\dbar$ for
open sets in $\mathbb{C}$ through
two different kinds of potential-theoretic conditions.
  
  The $\dbar$-operator is 
  initially defined as 
$$\dbar f=\sum_{j=1}^{n}\frac{\partial
f}{\partial\bar{z}}_{j}d\bar{z}_{j}$$ for
  any function $f$ which is
differentiable on an open set in $\mathbb{C}^{n}$. We shall
consider the maximal $L^{2}$-extension
of $\dbar$ for the given open set. A reason for considering the
$\dbar$-operator as an
$L^{2}$-operator is that it allows one to employ Hilbert space
methods to solve the inhomogeneous
Cauchy--Riemann equations. This is of importance for the
construction of holomorphic functions in
higher dimension due to the lack of power series techniques
which are available in one complex
dimension. Note that for planar open sets, the $\dbar$-operator
may be
  identified with an extension of the derivative operator 
  $\frac{\partial}{\partial \bar{z}}$.

In this article, we give necessary and sufficient
potential-theoretic conditions for the range of $\dbar$ on an
open set $\Omega\subset\mathbb{C}$ to be closed in
$L^{2}(\Omega)$. The closed range
property is known to hold for $\dbar$ on $\Omega$ iff
  there exists a constant $C>0$ such that
 \begin{align}\label{I:CRprop0}
   \|u\|_{L^{2}(\Omega)}\leq C\|\dbar u\|_{L^{2}(\Omega)} 
 \end{align}   
for all $u\in L^{2}(\Omega)$ with $\dbar u\in L^{2}(\Omega)$ and
$u$ orthogonal to the kernel of
$\dbar$, see \cite[Theorem 1.1.1]{Hormander65}. The kernel of
$\dbar$ is the closed subspace of
$L^{2}(\Omega)$ consisting of functions
holomorphic on $\Omega$. This space is commonly called the
Bergman space and denoted by
$A^{2}(\Omega)$. Inequality \eqref{I:CRprop0} may be
reformulated as
 \begin{align}\label{I:CRprop1}
\|u\|_{L^{2}(\Omega)}\leq
C\|u_{\bar{z}}\|_{L^{2}(\Omega)}\sjump\;\;\forall\;u\perp
   A^{2}(\Omega) \text{ with } u_{\bar{z}}\in L^{2}(\Omega).
 \end{align}

The relevance of \eqref{I:CRprop1} (or \eqref{I:CRprop0}) lies
in the fact that, if the
$\dbar$-operator has closed
range for an open set in dimension greater than $1$, on two
consecutive form levels, then the
$\dbar$-Neumann operator exists as a bounded $L^{2}$-operator.
Characterizing such open sets in
higher dimensions
is an unresolved problem. A first step towards resolving this
question is to establish necessary
and sufficient
conditions for the closed range property to hold on planar open
sets.

Another point of interest of \eqref{I:CRprop1} is its formal
similarity to the
Poincar{\'e}--Wirtinger inequality. The latter is said to hold
on a domain
$\Omega\subset\mathbb{C}$, if there exists a constant $C>0$ such
that
\begin{align*}
\|v-v_{\Omega}\|_{L^{2}(\Omega)}\leq C\|\nabla
v\|_{L^{2}(\Omega)}
\end{align*}
for all $v$ in $H^{1}(\Omega)$, the $L^{2}$-Sobolev-$1$-space of
$\Omega$. Here, $v_{\Omega}$ is
the average value of $v$
on $\Omega$. 
Since the kernel, $\ker\nabla$, of $\nabla$ is either the set of
constants or trivial, it follows
that $v-v_{\Omega}$ is orthogonal to $\ker\nabla$. In fact,
$u\in
L^{2}(\Omega)\cap\left(\ker\nabla\right)^{\perp}$ iff
$u_{\Omega}=0$. Thus, the
Poincar{\'e}-Wirtinger inequality is
\begin{align*}
\|u\|_{L^{2}(\Omega)}\leq C\|\nabla
u\|_{L^{2}(\Omega)}\;\;\sjump\forall\;u\in H^{1}(\Omega)\cap
\left(\ker\nabla\right)^{\perp}.
\end{align*}
Hence the closed range property of $\dbar$ may be considered a
Poincar{\'e}--Wirtinger
inequality for $\dbar$. 

It turns out that the closed range property for $\dbar$ is more
closely related to the
Poincar{\'e}--Dirichlet inequality. That is, the inequality
\begin{align*}
\|v\|_{L^{2}(\Omega)}\leq C\|\nabla
v\|_{L^{2}(\Omega)}\;\;\sjump\forall\;v\in
H_{0}^{1}(\Omega)\end{align*}
for some constant $C>0$; here $H_{0}^{1}(\Omega)$ is the
completion of
$\mathcal{C}^{\infty}_{c}(\Omega)$
with respect to the Sobolev-$1$-norm. At first, this might seem
surprising as membership to the
domain of
$\dbar$ has no boundary condition folded in. However, the domain
of the Hilbert space adjoint,
$\dbarstar$,
of $\dbar$ is contained in $H_{0}^{1}(\Omega)$. Due to the
Closed Range Theorem of
Banach, the $\dbar$-operator has closed range if and only if its
Hilbert space adjoint does. So it
might be less
surprising that the Poincar{\'e}--Dirichlet inequality is in
fact equivalent to $\dbar$ having
closed range in
$L^{2}(\Omega)$, see Theorem \ref{T:main} below.

\medskip

To describe the closed range property for $\dbar$ on planar open
sets in potential-theoretic
terms, we use the
notion of logarithmic capacity of a set in the complex plane. We
denote the logarithmic capacity
of a set
$E\subset\mathbb{C}$ by $\lcap{E}$; see
Section~\ref{SS:potentialtheory} for the definition.
Following nomenclature used in describing sufficiency conditions
for the Poincar{\'e}--Dirichlet
inequality, see,
e.g., \cite[\S 2]{Sou99},\cite[Proposition 2.1]{Sou00}, and
references therein, we introduce the
following
terminology. For a set $\Omega\subset\mathbb{C}$, define the
\emph{capacity inradius} of $\Omega$
by
$$\rho_{\lcap}(\Omega)=\sup\left\{R\geq0:\;\forall\;\delta>0\;\exists
\;z\in\mathbb{C}\text{ such
that }
\lcap\left(\mathbb{D}(z,R)\cap\Omega^{c}\right)<\delta\right\},$$
see Section~\ref{SS:potentialtheory} for more details on this
concept. Finiteness of the capacity
inradius completely
characterizes those open sets for which $\dbar$ has closed
range:

\pagebreak[3]

\begin{theorem}\label{T:main}
Let $\Omega\subset\mathbb{C}$ be an open set. Then the following
are equivalent:
  \begin{itemize}
    \item[(1)]  $\dbar$ has closed range in $L^{2}(\Omega)$.
\item[(2)] The Poincar{\'e}--Dirichlet inequality holds on
$\Omega$.
       \item[(3)] $\rho_{\lcap}(\Omega)<\infty$.
\item[(4)] There exists a bounded function
$\varphi\in\mathcal{C}^{\infty}(\Omega)$ and a constant
    $c>0$ such that
    $\triangle\varphi(z)>c$ holds for all $z\in\Omega$.
  \end{itemize}
\end{theorem}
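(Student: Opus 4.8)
The plan is to prove $(1)\Leftrightarrow(2)$, then $(2)\Rightarrow(3)$, $(3)\Rightarrow(4)$, and $(4)\Rightarrow(1)$, which closes the loop; the first equivalence is Hilbert-space duality and the remaining three are potential theory. For $(1)\Leftrightarrow(2)$ I would first identify the adjoint. On its maximal domain $\dbar$ is closed and densely defined, so $\dbarstar$ is closed and densely defined. Integration by parts against test forms shows $\mathcal{C}^{\infty}_{c}(\Omega)\,d\bar z\subseteq\dom\dbarstar$ with $\dbarstar(v\,d\bar z)=-v_z$; passing to the graph closure and using the inclusion $\dom\dbarstar\subseteq H^{1}_{0}(\Omega)$ recorded in the introduction together with the $L^{2}$-identity $\|v_z\|_{L^{2}}=\|v_{\bar z}\|_{L^{2}}=\tfrac12\|\nabla v\|_{L^{2}}$ valid for $v\in\mathcal{C}^{\infty}_{c}(\Omega)$, one gets $\dom\dbarstar=H^{1}_{0}(\Omega)\,d\bar z$ with $\dbarstar(v\,d\bar z)=-v_z$. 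Moreover $\ker\dbarstar=\{0\}$: if $v\in H^{1}_{0}(\Omega)$ and $v_z=0$ then $\|v_{\bar z}\|=\|v_z\|=0$, so $v$ is locally constant, and a nonzero constant never lies in $H^{1}_{0}(\Omega)$ --- for $\Omega=\mathbb{C}$ because $1\notin L^{2}$, and otherwise because $\Omega^{c}$ has positive logarithmic capacity in some disk, which excludes it by the capacitary Poincar\'e inequality. By the Closed Range Theorem, $\dbar$ has closed range iff $\dbarstar$ does, iff there is $C>0$ with $\|g\|_{L^{2}}\le C\|\dbarstar g\|_{L^{2}}$ for all $g\in\dom\dbarstar$ (orthogonality to $\ker\dbarstar$ being vacuous); rewritten via $\dom\dbarstar=H^{1}_{0}(\Omega)$ and $\|\dbarstar(v\,d\bar z)\|=\tfrac12\|\nabla v\|$, this is exactly the Poincar\'e--Dirichlet inequality.

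For $(2)\Rightarrow(3)$ I would argue the contrapositive. Suppose $\rho_{\lcap}(\Omega)=\infty$ and fix $R>0$ large. Then for any $\delta>0$ there is $z$ with $\lcap\big(\mathbb{D}(z,R)\cap\Omega^{c}\big)<\delta$; set $E=\Omega^{c}\cap\mathbb{D}(z,\tfrac{3}{4}R)$, so $\lcap(E)<\delta$. Let $\beta$ be a cutoff equal to $1$ on $\mathbb{D}(z,R/2)$, supported in $\mathbb{D}(z,\tfrac{3}{4}R)$, with $|\nabla\beta|\le C/R$, and let $w\in H^{1}_{0}(\mathbb{D}(z,R))$ be the capacitary potential of $E$ relative to $\mathbb{D}(z,R)$, so $0\le w\le1$, $w=1$ quasi-everywhere on $E$, and $\|\nabla w\|_{L^{2}}^{2}\le C/\log(R/\lcap(E))$; by the Poincar\'e inequality on $\mathbb{D}(z,R)$ also $\|w\|_{L^{2}}\to0$ as $\lcap(E)\to0$. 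Choosing $\delta$ small (depending on $R$), the function $v=\beta(1-w)$ lies in $H^{1}_{0}(\Omega)$ --- it is in $H^{1}$, compactly supported, and vanishes quasi-everywhere on $\Omega^{c}$ --- and satisfies $\|v\|_{L^{2}}\ge\tfrac14\sqrt{\pi}\,R$ while $\|\nabla v\|_{L^{2}}\le\|\nabla\beta\|_{L^{2}}+\|\nabla w\|_{L^{2}}\le C'$ with $C'$ independent of $R$. Letting $R\to\infty$ defeats the Poincar\'e--Dirichlet inequality.

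For $(3)\Rightarrow(4)$: since $\rho_{\lcap}(\Omega)<\infty$, pick $R>\rho_{\lcap}(\Omega)$; then there is $\delta_{0}>0$ with $\lcap\big(\mathbb{D}(w,R)\cap\Omega^{c}\big)\ge\delta_{0}$ for every $w\in\mathbb{C}$. The classical estimate relating harmonic measure to logarithmic capacity yields $\eta=\eta(R,\delta_{0})>0$ such that, for every $w\in\Omega$, Brownian motion started at $w$ meets $\Omega^{c}$ before leaving $\mathbb{D}(w,2R)$ with probability at least $\eta$. Iterating this over successive exits of disks of radius $2R$ (strong Markov property, or equivalently a maximum-principle induction on balls of radius $2R$) bounds the Green potential $T(z)=\int_{\Omega}G_{\Omega}(z,w)\,dw$ of the constant $1$, which solves $-\triangle T=1$ with $T\ge0$: one obtains $\sup_{\Omega}T\le CR^{2}/\eta<\infty$. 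By elliptic regularity $T\in\mathcal{C}^{\infty}(\Omega)$, so $\varphi=-T$ is a bounded function in $\mathcal{C}^{\infty}(\Omega)$ with $\triangle\varphi\equiv1>0$, giving $(4)$ with $c=\tfrac12$.

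For $(4)\Rightarrow(1)$: every open subset of $\mathbb{C}$ is pseudoconvex, and $\triangle\varphi=4\,\partial_z\partial_{\bar z}\varphi>c$, so the one-variable H\"ormander $L^{2}$-estimate provides, for each $f\in L^{2}_{(0,1)}(\Omega,e^{-\varphi})$, a solution of $\dbar u=f$ with $\int_{\Omega}|u|^{2}e^{-\varphi}\le\tfrac{4}{c}\int_{\Omega}|f|^{2}e^{-\varphi}$; as $\varphi$ is bounded, the weighted and unweighted $L^{2}$-norms are comparable, so $\dbar u=f$ is solvable with $\|u\|_{L^{2}}\le C\|f\|_{L^{2}}$. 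Given $u_{0}\in\dom\dbar$ with $u_{0}\perp A^{2}(\Omega)$, solve $\dbar u_{1}=\dbar u_{0}$ with $\|u_{1}\|_{L^{2}}\le C\|(u_{0})_{\bar z}\|_{L^{2}}$; then $u_{0}-u_{1}\in A^{2}(\Omega)$, whence $\|u_{0}\|^{2}=\langle u_{0},u_{1}\rangle\le\|u_{0}\|\,\|u_{1}\|$ and $\|u_{0}\|_{L^{2}}\le C\|(u_{0})_{\bar z}\|_{L^{2}}$, which is \eqref{I:CRprop1}. I expect the main obstacle to be the potential-theoretic content of $(2)\Rightarrow(3)$ and $(3)\Rightarrow(4)$ --- the quantitative comparison of logarithmic capacity with, respectively, the Dirichlet energy of capacitary potentials and Brownian hitting probabilities, and the covering and iteration these require; the duality in $(1)\Leftrightarrow(2)$ and the H\"ormander estimate in $(4)\Rightarrow(1)$ are essentially formal once the adjoint of $\dbar$ has been identified.
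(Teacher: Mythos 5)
Your proposal is correct in outline and closes the cycle $(1)\Leftrightarrow(2)\Rightarrow(3)\Rightarrow(4)\Rightarrow(1)$; your treatment of $(1)\Leftrightarrow(2)$ and of $(4)\Rightarrow(1)$ essentially coincides with the paper's (density of $\mathcal{C}^{\infty}_{c}(\Omega)$ in $\dom(\dbarstar)$ in the graph norm, exclusion of nonzero constants from $H_0^1$, H\"ormander's weighted estimate). The two potential-theoretic implications, however, follow genuinely different routes. For $(2)\Rightarrow(3)$ you build test functions $\beta(1-w)$ from capacitary potentials, which requires two substantial but standard inputs: the comparison of the condenser capacity of the pair $(E,\mathbb{D}(z,R))$ with $1/\log(R/\lcap(E))$, and the quasi-everywhere characterization of $H_0^1(\Omega)$ needed to certify that $\beta(1-w)$ is admissible. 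The paper instead proves $(1)\Rightarrow(3)$ by rescaling to the unit disc and showing $\lambda_1(\mathbb{D}(0,1)\setminus K_j)\to\lambda_1(\mathbb{D}(0,1))$ as $\lcap(K_j)\to 0$, dominating the harmonic extension of the first eigenfunction by a normalized logarithmic potential of the equilibrium measure; this is longer but self-contained, using only Frostman-type facts. For $(3)\Rightarrow(4)$ you take $\varphi=-T$ with $T$ the torsion (Green) potential and bound $\sup T$ by a hitting-probability or maximum-principle iteration; this gives the cleaner conclusion $\triangle\varphi\equiv 1$, but rests on the harmonic-measure-versus-capacity lower bound and on justifying finiteness of $T$ on unbounded $\Omega$, whereas the paper's $\varphi=\sum_{(j,k)}e^{-4p_{j,k}}$, a lattice sum of exponentials of equilibrium potentials of compact subsets of $\Omega^{c}$, is elementary and fully explicit. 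One imprecision worth fixing: in showing $\ker\dbarstar=\{0\}$ you dichotomize on $\Omega=\mathbb{C}$ versus $\Omega\neq\mathbb{C}$, but a nonempty $\Omega^{c}$ may be polar, so the capacitary Poincar\'e inequality does not apply; the correct dichotomy is infinite measure (so $1\notin L^2(\Omega)$) versus finite measure (so $\Omega^{c}$ has positive area, hence positive capacity in some disc), or one can invoke the paper's Fubini/one-dimensional-trace argument (Lemma~\ref{L:H01notdense}), applied on each connected component since a locally constant element of $H_0^1(\Omega)$ need not be globally constant.
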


The implication ``(4)$\Rightarrow$(1)'' is a consequence of work
by H\"ormander in \cite[Theorem
2.2.1$'$ ]{Hormander65}, see also \cite[Corollary
6.11]{HerMcN16}. Our proof of
``(3)$\Rightarrow$(4)'' is constructive. In
fact, the
function $\varphi$ in (4) is built from a sequence of potential
functions associated to the
equilibrium measures of
 certain  compact sets in the complement of the open set.

The idea for the proof of ``(3)$\Rightarrow$(4)'' lead us to the
completion of the
characterization of planar open sets with infinite dimensional
Bergman spaces in terms of the
existence of bounded,
strictly subharmonic functions, see (4) in the following
theorem.

\begin{theorem}\label{T:Bergmanspace}
Let $\Omega\subset\mathbb{C}$ be an open set. Then the following
are equivalent:
  \begin{itemize}
    \item[(1)] $A^{2}(\Omega)\neq\{0\}$.
    \item[(2)] $\dim A^{2}(\Omega)=\infty$.
    \item[(3)] $\lcap(\Omega^{c})>0$.
\item[(4)] There exists a bounded function
$\varphi\in\mathcal{C}^{\infty}(\Omega)$ such that
    $\triangle\varphi(z)>0$ for all $z\in \Omega$.
  \end{itemize}
\end{theorem}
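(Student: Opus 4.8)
The plan is to establish the cycle $(2)\Rightarrow(1)\Rightarrow(3)\Rightarrow(4)\Rightarrow(2)$; the implication $(2)\Rightarrow(1)$ is immediate, and the content lies in the other three. I would prove $(1)\Rightarrow(3)$ by contraposition: if $\lcap(\Omega^{c})=0$ then $E:=\Omega^{c}$ is a closed polar set, hence of zero area, so $L^{2}(\Omega)=L^{2}(\mathbb{C})$, and since $E$ has zero logarithmic capacity it carries cutoffs $\chi_{\varepsilon}\in\mathcal{C}^{\infty}_{c}(\mathbb{C})$ with $0\le\chi_{\varepsilon}\le1$, $\chi_{\varepsilon}\equiv0$ near $E$, $\chi_{\varepsilon}\to1$ pointwise, and $\|\nabla\chi_{\varepsilon}\|_{L^{2}(K)}\to0$ for every compact $K$. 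Given $f\in A^{2}(\Omega)$, extend it by $0$ across $E$; for $\phi\in\mathcal{C}^{\infty}_{c}(\mathbb{C})$ the product $\chi_{\varepsilon}\phi$ is a test function supported in $\Omega$, so $\int f\,\dbar(\chi_{\varepsilon}\phi)=0$, and splitting $\dbar(\chi_{\varepsilon}\phi)=\chi_{\varepsilon}\dbar\phi+\phi\,\dbar\chi_{\varepsilon}$ and letting $\varepsilon\to0$ (the cross term being controlled by Cauchy--Schwarz) gives $\dbar f=0$ on all of $\mathbb{C}$. Thus $f$ is entire, and since $f\in L^{2}(\mathbb{C})=A^{2}(\mathbb{C})=\{0\}$ by the mean value inequality, $f\equiv0$, i.e.\ $A^{2}(\Omega)=\{0\}$.

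The heart of the matter is $(3)\Rightarrow(4)$. Since $\lcap(\Omega^{c})>0$, every component $\Omega_{0}$ of $\Omega$ has non-polar complement, hence is Greenian. Fix a countable dense set $\{z_{k}\}\subset\Omega$; for each $k$ let $\Omega^{(k)}$ be the component containing $z_{k}$, let $g_{k}:=g_{\Omega^{(k)}}(\cdot,z_{k})=-\log|z-z_{k}|+h_{k}(z)$ be the Green function with pole at $z_{k}$ ($h_{k}$ harmonic on $\Omega^{(k)}$), and set $\psi_{k}:=e^{-2g_{k}}=|z-z_{k}|^{2}e^{-2h_{k}}$ on $\Omega^{(k)}$ and $\psi_{k}:=0$ on the remaining components. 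Then $\psi_{k}\in\mathcal{C}^{\infty}(\Omega)$ with $0\le\psi_{k}<1$, while $\triangle\psi_{k}=4e^{-2g_{k}}|\nabla g_{k}|^{2}\ge0$ on $\Omega^{(k)}\setminus\{z_{k}\}$ and $\triangle\psi_{k}(z_{k})=4e^{-2h_{k}(z_{k})}>0$; thus $\{\triangle\psi_{k}=0\}$ is exactly the critical set of $g_{k}$. The decisive point is that for every $z\in\Omega$ some $\psi_{k}$ is strictly subharmonic at $z$: choosing $z_{k}\to z$ within the component of $z$, the identity $(\partial_{w}g_{k})(z)=-\tfrac{1}{2(z-z_{k})}+(\partial_{w}h_{k})(z)$, together with an interior estimate on $\nabla h_{k}$ that is uniform in $z_{k}$ near $z$ (from interior estimates for harmonic functions and joint continuity of the Green function off the diagonal), gives $|\nabla g_{k}(z)|\to\infty$, hence $\triangle\psi_{k}(z)>0$ for $z_{k}$ close enough to $z$. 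Finally set $\varphi:=\sum_{k}c_{k}\psi_{k}$, choosing $c_{k}>0$ — by a diagonal argument against a compact exhaustion of $\Omega$ — small enough that the series converges in $\mathcal{C}^{\infty}_{\mathrm{loc}}(\Omega)$ and $\sum_{k}c_{k}<\infty$; then $\varphi\in\mathcal{C}^{\infty}(\Omega)$ is bounded and $\triangle\varphi=\sum_{k}c_{k}\triangle\psi_{k}>0$ throughout $\Omega$.

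For $(4)\Rightarrow(2)$ I would run H\"ormander's weighted $L^{2}$ estimate for $\dbar$, just as in the proof of $(4)\Rightarrow(1)$ of Theorem~\ref{T:main}, but with several logarithmic poles. Given distinct $p_{1},\dots,p_{N}\in\Omega$, pick $\chi\in\mathcal{C}^{\infty}_{c}(\Omega)$ equal to $1$ near $p_{1}$ and supported away from $p_{2},\dots,p_{N}$, and solve $\dbar u=\dbar\chi$ with weight $\Phi=\varphi+2\sum_{i}\eta_{i}\log|z-p_{i}|$, where the $\eta_{i}$ are cutoffs $\equiv1$ near $p_{i}$ with small, pairwise disjoint supports. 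Since $\varphi$ is bounded, $e^{-\Phi}$ is comparable to $\prod_{i}|z-p_{i}|^{-2\eta_{i}}$, so the estimate $\int_{\Omega}|u|^{2}e^{-\Phi}\lesssim\int_{\Omega}|\dbar\chi|^{2}(\triangle\Phi)^{-1}e^{-\Phi}$ forces $u\in L^{2}(\Omega)$ and $u(p_{i})=0$ for every $i$; hence $f_{1}:=\chi-u\in A^{2}(\Omega)$ satisfies $f_{1}(p_{1})=1$, $f_{1}(p_{j})=0$ for $j\ge2$, and carrying this out for each index yields $N$ linearly independent elements, whence $\dim A^{2}(\Omega)=\infty$. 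The one delicate point — and the reason this goes slightly beyond Theorem~\ref{T:main}, where $\triangle\varphi$ is bounded below by a positive constant — is keeping $\triangle\Phi>0$ on the (compact) transition annuli of the $\eta_{i}$, where $\triangle\varphi$ is merely positive; this is arranged by first adding to $\varphi$ a finite linear combination of the functions $\psi_{k}$ from the previous step, which makes $\triangle\varphi$ as large as one wishes on any prescribed compact subset of $\Omega$ while keeping it bounded.

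The main obstacle throughout is $(3)\Rightarrow(4)$, specifically the verification that $\triangle\varphi$ is strictly — not merely weakly — positive at \emph{every} point of $\Omega$. This is what forces the use of Green functions with poles inside $\Omega$: potentials of equilibrium measures of compact subsets of $\Omega^{c}$, the natural first candidates, are harmonic off their support and hence only weakly subharmonic on $\Omega$, and nonlinear functions built from them degenerate at critical points and on bounded complementary components; the Green-function construction circumvents this via the automatic strict subharmonicity near the pole, combined with the density-and-gradient-blow-up argument.
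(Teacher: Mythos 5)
Your proposal is correct, but it takes a genuinely different route on the one implication the paper actually proves, namely (3)$\Rightarrow$(4): the paper disposes of (1)$\Leftrightarrow$(2), (1)$\Leftrightarrow$(3), and (4)$\Rightarrow$(2) purely by citation (Wiegerinck, Carleson, and Harz--Herbort--Gallagher, respectively), whereas you set up the cycle (2)$\Rightarrow$(1)$\Rightarrow$(3)$\Rightarrow$(4)$\Rightarrow$(2), which makes Wiegerinck's theorem a byproduct and requires only the easy, removability half of Carleson's theorem (your cutoff argument for that half is fine, resting on the standard fact that compact polar sets have vanishing $W^{1,2}$-capacity). For (3)$\Rightarrow$(4) the paper takes a single compact non-polar $K\subset\Omega^{c}$ with equilibrium potential $p$ and uses $\varphi=e^{-p}$; the difficulty you correctly identify --- that $\triangle(e^{-p})=e^{-p}\lvert\nabla p\rvert^{2}$ vanishes at critical points of $p$ --- is resolved there not by abandoning equilibrium potentials but by an angle argument showing $\nabla p\neq0$ outside a ball $B$ of twice the radius of a ball containing $K$, plus a compactly supported strictly subharmonic bump $\epsilon\chi(\lvert z-z_{0}\rvert^{2})$ repairing the region near $K$. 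So your closing claim that the degeneration \emph{forces} the use of Green functions is inaccurate; still, your alternative --- $\psi_{k}=e^{-2g_{k}}$ for Green functions with poles at a countable dense set, strict subharmonicity at each $z$ from the blow-up of $\lvert\nabla g_{k}(z)\rvert$ as the pole approaches $z$ (which does follow from joint continuity of $g$ off the diagonal plus interior gradient estimates for the harmonic part $h_{k}$), and a $\mathcal{C}^{\infty}_{\mathrm{loc}}$-convergent series $\sum_{k}c_{k}\psi_{k}$ --- is sound and self-contained, at the price of invoking Greenian-domain theory where the paper needs only Frostman's theorem and boundary regularity of smoothly bounded sets. Your sketch of (4)$\Rightarrow$(2) is the standard H\"ormander argument of the cited reference; note only that on the transition annuli of the $\eta_{i}$ it suffices to replace $\varphi$ by $A\varphi$ for large $A$ (still bounded, with $\triangle(A\varphi)=A\triangle\varphi$ bounded below on that compact set), so the auxiliary $\psi_{k}$'s are not needed there.
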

The equivalence of (1) and (2) is shown by Wiegerinck in
\cite{Wie84}, the equivalence of (1) and
(3) by Carleson in \cite[Theorem 1.a in \S VI]{Car67}, the
implication ``(4)$\Rightarrow$(2)'' by
Harz, Herbort, and the first author in \cite{GaHaHe17}. It
follows from ``(3)$\Rightarrow$(4)''
and from the proof of ``(4)$\Rightarrow$(2)'' that $A^{2}(\Omega)$ is
a separating set for $\Omega$ iff
it is non-trivial.

The paper is structured as follows. We define basic notions of
the $L^{2}$-theory for $\dbar$ and
potential theory for open sets in the complex plane in Section
\ref{S:prelims}. In this section,
we also recall the connection between the best constant in the
Poincar{\'e}--Dirichlet inequality
and the lowest eigenvalue of the Dirichlet--Laplacian. Moreover,
we derive basic characteristics
of the closed range property of $\dbar$ and conclude the section
with a proof of the equivalence
of the closed range property for $\dbar$ and the
Poincar{\'e}--Dirichlet inequality. Section
\ref{S:necessity} contains the proof of the implication
``(1)$\Rightarrow$(3)'' of Theorem
\ref{T:main}. We first give a proof of this implication under an
additional assumption, since it
is based on standard $\dbar$-arguments that indicate how to
approach the higher dimensional case.
The general proof, also in Section \ref{S:necessity}, is based
on the connection of the closed
range property to the Poincar{\'e}--Dirichlet inequality for
bounded open sets, a solution to the
(lowest) eigenvalue problem for the Dirichlet--Laplacian on the
unit disc, and so-called
$r$-logarithmic potentials. The proofs of
``(3)$\Rightarrow$(4)'' of
Theorem~\ref{T:main} and ``(3)$\Rightarrow$(4)'' of Theorem
\ref{T:Bergmanspace} are given in
Sections \ref{S:sufficiency} and
\ref{S:Bergmanspace}, respectively. Both are constructive and
based on using potential functions
associated to certain compact sets in the complement of the open
set in consideration.


\section{Preliminaries}\label{S:prelims}
\subsection{The $\dbar$-operator and its closed range property
on open sets
in $\mathbb{C}$}\label{SS:dbarstuff}
For an open set $\Omega\subset\mathbb{C}$, we denote by
$\mathcal{C}^{\infty}(\Omega)$ and
$\mathcal{C}^{\infty}_{c}(\Omega)$ the family of smooth
functions on $\Omega$ and the family of
smooth functions on $\Omega$ whose (closed) support is compact
in $\Omega$, respectively. As
usual, $L^{2}(\Omega)$ is the space of square-integrable
functions on $\Omega$, the associated
norm and inner product are denoted by $\|.\|_{L^{2}(\Omega)}$
and $(.,.)_{L^{2}(\Omega)}$,
respectively. The $L^{2}$-Sobolev-$1$-space, $H^{1}(\Omega)$, on
$\Omega$ is the subspace of
functions $f\in L^{2}(\Omega)$ for which the norm
$$\|f\|_{H^{1}(\Omega)}=\left(\|f\|_{L^{2}(\Omega)}^{2}+\|\nabla
f \|_{L^{2}(\Omega)}^{2}
\right)^{\frac{1}{2}}$$ is finite.
Here $\nabla f$ is meant in the sense of distributions.
$H_{0}^{1}(\Omega)$ is the closure of
$\mathcal{C}^{\infty}_{c}(\Omega)$ with respect to
$\|.\|_{H^{1}(\Omega)}$.

The $\dbar$-operator on $\Omega$ is defined as $\dbar
u=u_{\bar{z}}\,d\bar{z}$ for any
$u\in\mathcal{C}^{\infty}(\Omega)$. Since $(0,1)$-forms on
$\Omega$ may be identified with
functions on $\Omega$, we henceforth identify $\dbar u$ with
$u_{\bar{z}}$. The maximal extension
of the $\dbar$-operator, still denoted by $\dbar$, is defined as
follows: we first allow $\dbar$
to act on functions in $L^{2}(\Omega)$ in the sense of
distributions and then restrict its domain
to those functions whose image under $\dbar$ lies in
$L^{2}(\Omega)$. That is,
$$
\dom(\dbar)=\left\{u\in L^{2}(\Omega): \dbar u\text{ (in the
sense of distributions) in }
L^{2}(\Omega)\right\}.
$$
As $\mathcal{C}^{\infty}_{c}(\Omega)$ is dense in
$L^{2}(\Omega)$ with respect to the
$L^{2}(\Omega)$-norm, it follows that $\dbar$ is a densely
defined operator on $L^{2}(\Omega)$;
moreover, it is a closed operator. To define the Hilbert space
adjoint, $\dbarstar$, of $\dbar$ we
first define its domain $\dom(\dbarstar)$ to be the space of
those $v\in L^{2}(\Omega)$ for which
there exists a positive constant $C=C(v)$ such that $$|(\dbar
u,v)_{L^{2}(\Omega)}|\leq
C\|u\|_{L^{2}(\Omega)}
  \sjump\;\;\forall\;u\in\dom(\dbar),
$$
i.e., for any $v\in\dom(\dbarstar)$, the map $u\mapsto (\dbar
u,v)_{L^{2}(\Omega)}$ is a bounded
linear functional on $\dom(\dbar)$. Hence, by Hahn--Banach, the
map extends to a bounded linear
functional on $L^{2}(\Omega)$.
It then follows from the Riesz Representation theorem that for
any $v\in\dom(\dbarstar)$ there
exists a $w\in L^{2}(\Omega)$ such that $$(\dbar
u,v)_{L^{2}(\Omega)}=(u,w)_{L^{2}(\Omega)}\sjump\;\;\forall
\;u\in\dom(\dbar).$$ Set $\dbarstar
v=w$. If $\Omega$ has smooth boundary, it follows from an
integration by parts argument, that
whenever
$v\in\dom(\dbarstar)\cap\mathcal{C}^{\infty}(\overline{\Omega})$,
then $v|_{b\Omega}=0$
and $\dbarstar v=-v_{z}$. Furthermore the following density
result holds.
\begin{lemma}\label{L:Straube}
Let $\Omega\subset\mathbb{C}$ be an open set, then
$\mathcal{C}^{\infty}_{c}(\Omega)$ is dense in
$\dom(\dbarstar)$ with respect to the graph norm
$$v\mapsto\left(\|v\|_{L^{2}(\Omega)}^{2}+\|\dbarstar
v\|_{L^{2}(\Omega)}^{2} \right)^{1/2}.$$
\end{lemma}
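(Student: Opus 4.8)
The plan is to recognize $\dbarstar$ as the graph-norm closure of the naive operator on $\mathcal{C}^{\infty}_{c}(\Omega)$, purely through the elementary adjoint calculus for densely defined operators on a Hilbert space; in particular, no regularity of $b\Omega$ and no boundary mollification will enter. To this end I would introduce the densely defined operator $S\colon\mathcal{C}^{\infty}_{c}(\Omega)\to L^{2}(\Omega)$ given by $Sv=-v_{z}$. The same integration by parts already used above for $\dbarstar$ shows that $\mathcal{C}^{\infty}_{c}(\Omega)\subseteq\dom(\dbarstar)$ and $\dbarstar v=Sv$ for $v\in\mathcal{C}^{\infty}_{c}(\Omega)$; hence the content of the lemma is exactly the identity $\dbarstar=\overline{S}$, the closure of $S$.

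The one genuine computation is the identification of the Hilbert space adjoint $S^{*}$, and I would show $S^{*}=\dbar$. For $u\in L^{2}(\Omega)$ and $v\in\mathcal{C}^{\infty}_{c}(\Omega)$, integrating by parts (legitimate since $v$ has compact support in $\Omega$) gives
\[
(Sv,u)_{L^{2}(\Omega)}=-\int_{\Omega}v_{z}\,\overline{u}\,dA\;=\;\langle\,\partial_{z}\overline{u}\,,\,v\,\rangle,
\]
where $\partial_{z}\overline{u}$ is understood in the sense of distributions and $\langle\,\cdot\,,\,\cdot\,\rangle$ is the distributional pairing. Consequently $u\in\dom(S^{*})$ if and only if $v\mapsto(Sv,u)$ is bounded in $\|v\|_{L^{2}(\Omega)}$, if and only if the distribution $\partial_{z}\overline{u}$ is represented by an $L^{2}(\Omega)$ function, if and only if $u_{\overline{z}}=\overline{\partial_{z}\overline{u}}\in L^{2}(\Omega)$, i.e. if and only if $u\in\dom(\dbar)$; and in that case $S^{*}u=u_{\overline{z}}=\dbar u$. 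Thus $S^{*}=\dbar$.

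From here the proof closes by general principles: $S^{*}=\dbar$ is densely defined, so $S$ is closable and $\overline{S}=S^{**}=(\dbar)^{*}=\dbarstar$. By the very definition of the closure of an operator, $\dom(S)=\mathcal{C}^{\infty}_{c}(\Omega)$ is a dense subspace of $\dom(\overline{S})=\dom(\dbarstar)$ for the graph norm $v\mapsto\bigl(\|v\|^{2}_{L^{2}(\Omega)}+\|\dbarstar v\|^{2}_{L^{2}(\Omega)}\bigr)^{1/2}$, which is precisely the assertion.

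The only step requiring care is the adjoint computation in the second paragraph: one must handle correctly the antilinearity of the inner product and the distributional identity $\partial_{z}\overline{u}=\overline{u_{\overline{z}}}$, and verify both inclusions $\dom(S^{*})\subseteq\dom(\dbar)$ and $\dom(\dbar)\subseteq\dom(S^{*})$; the remainder is soft functional analysis. (A more hands-on route is available — extend $v\in\dom(\dbarstar)$ by zero to $\mathbb{C}$, observe that the extension has $\partial_{z}$-derivative in $L^{2}(\mathbb{C})$, then mollify and cut off — but it requires a delicate treatment of the cutoff in the collar around $b\Omega$ because $v$ is only assumed to be in $L^{2}$, and the adjoint argument sidesteps this entirely.)
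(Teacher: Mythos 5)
Your argument is correct, but it is not the route the paper takes. The paper proves density by the orthogonal-complement criterion: it takes $u\in\dom(\dbarstar)$ orthogonal to $\mathcal{C}^{\infty}_{c}(\Omega)$ in the graph inner product, observes that the orthogonality relation says $u+\dbar\dbarstar u=0$ as a distribution (so that $\dbarstar u\in\dom(\dbar)$), and then pairs with $u$ to get $\|u\|^{2}_{L^{2}(\Omega)}+\|\dbarstar u\|^{2}_{L^{2}(\Omega)}=0$. You instead set up the minimal operator $S=-\partial_{z}$ on $\mathcal{C}^{\infty}_{c}(\Omega)$, compute $S^{*}=\dbar$ (the maximal extension), and invoke von Neumann's identity $\overline{S}=S^{**}=(\dbar)^{*}=\dbarstar$, after which the graph-norm density is the definition of the closure. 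Both proofs are ``soft,'' use exactly one distributional integration by parts, and require no regularity of $b\Omega$. The paper's version is shorter and entirely self-contained; yours costs the extra verification that $\dom(S^{*})=\dom(\dbar)$ (where the care you flag about conjugations, $\overline{\partial_{z}\overline{u}}=u_{\bar z}$, is indeed the only delicate point, and you handle it correctly), but it buys a sharper structural conclusion: $\dbarstar$ \emph{is} the minimal closed extension of $-\partial_{z}$ from $\mathcal{C}^{\infty}_{c}(\Omega)$. Combined with the identity $2\|v_{z}\|_{L^{2}(\Omega)}=\|\nabla v\|_{L^{2}(\Omega)}$ for $v\in\mathcal{C}^{\infty}_{c}(\Omega)$ used in Lemma~\ref{L:closedrangecompactsupport}, this immediately explains the inclusion $\dom(\dbarstar)\subset H^{1}_{0}(\Omega)$ asserted in the introduction, which the paper's proof does not make explicit.
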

This density result could be expected considering that elements
of $\dom(\dbarstar)$ in some sense
vanish on the boundary while the above graph norm restricted to
$\mathcal{C}^{\infty}_{c}(\Omega)$
is equivalent to $\|.\|_{H^{1}(\Omega)}$, see the argument in
the proof of Lemma
\ref{L:closedrangecompactsupport}.
A concise proof of Lemma~\ref{L:Straube} may be found in
\cite[Proposition 2.3]{Straube10}. For
the convenience of the reader, we reproduce the proof here.
\begin{proof}
Let $u\in\dom(\dbarstar)$ be given. Suppose $u$ is orthogonal to
all functions on
$\mathcal{C}_{c}^{\infty}(\Omega)$ with respect to the inner
product associated to the graph norm,
i.e.,
  \begin{align*}
(u,v)_{L^{2}(\Omega)}+(\dbarstar u,\dbarstar
v)_{L^{2}(\Omega)}=0
    \;\;\sjump\forall\;v\in\mathcal{C}^{\infty}_{c}(\Omega).
  \end{align*}
If this forces $u$ to be zero, then the claim follows. Note
first that
$(\dbarstar u,\dbarstar v)_{L^{2}(\Omega)}$,
$v\in\mathcal{C}^{\infty}_{c}(\Omega)$,
defines $\dbar\dbarstar u$ in the sense of distributions. In
particular, it follows that
$u+\dbar\dbarstar u$ is
zero as a distribution. As $u\in L^{2}(\Omega)$, it then follows
that $\dbar\dbarstar u\in
L^{2}(\Omega)$.
  Thus
  $(u+\dbar\dbarstar u,u)_{L^{2}(\Omega)}=0$,
hence $$\|u\|^{2}_{L^{2}(\Omega)}+\|\dbarstar
u\|^{2}_{L^{2}(\Omega)}=0.$$
  Therefore $u=0$, which proves the claim.
\end{proof}

\medskip

The next proposition gives basic, equivalent descriptions for
the $\dbar$-operator to have
 closed range, which is the property that whenever 
$\{\dbar u_{n}\}_{n\in\mathbb{N}}$ converges in $L^{2}(\Omega)$
for $\{u_{n}\}_{n\in\mathbb{N}}
\subset\dom(\dbar)$, then $\|\dbar u_{n}-\dbar u\|_{L^2(\Omega)}$ goes to $0$ as $n\to\infty$
for some $u\in\dom(\dbar)$.

\begin{proposition}\label{P:CRpropeq}
Let $\Omega\subset\mathbb{C}$ be an open set. Then the following
are equivalent:
  \begin{itemize}
    \item[(i)] $\dbar$ has closed range in $L^{2}(\Omega)$.
  
    \item[(ii)]  There exists a constant $C>0$ such that
    $\|u\|_{L^{2}(\Omega)}\leq C\|\dbar u\|_{L^{2}(\Omega)}$
    holds for all 
    $u\in\dom(\dbar)$ with $u\perp\ker\dbar$.
  
    \item[(iii)] There exists a constant $C>0$ such that 
$\|v\|_{L^{2}(\Omega)}\leq C\|\dbarstar v\|_{L^{2}(\Omega)}$    holds for all 
    $v\in \dom(\dbarstar)$ with $v\perp\ker \dbarstar$.
   
\item[(iv)] There exists a constant $C>0$ such that for all $f$    in the range of $\dbar$
    there exists a $v\in L^2(\Omega)$
such that $\dbar v=f$ holds in the distributional sense and $$\|v\|_{L^{2}(\Omega)}\leq
C\|f\|_{L^{2}(\Omega)}.$$
   \end{itemize}
\end{proposition}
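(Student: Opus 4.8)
The plan is to derive all four statements from a single structural fact: $\dbar$ is a closed, densely defined operator whose kernel $\ker\dbar=A^{2}(\Omega)$ is a closed subspace of $L^{2}(\Omega)$, so it restricts to an injection of $(\ker\dbar)^{\perp}\cap\dom(\dbar)$ onto its range, and the Bounded Inverse (Open Mapping) Theorem governs when that injection has a bounded inverse. Concretely, I would prove ``(i)$\Leftrightarrow$(ii)'' and ``(ii)$\Leftrightarrow$(iv)'' directly, and then obtain ``(i)$\Leftrightarrow$(iii)'' by applying ``(i)$\Leftrightarrow$(ii)'' to $\dbarstar$ in place of $\dbar$, together with Banach's Closed Range Theorem (already cited in the introduction).

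For ``(i)$\Rightarrow$(ii)'': since $\dbar$ is closed, $\dom(\dbar)$ with the graph norm is a Banach space and $V:=(\ker\dbar)^{\perp}\cap\dom(\dbar)$ is a closed subspace of it. If the range is closed, then $\dbar\colon V\to\operatorname{range}(\dbar)$ is a continuous bijection of Banach spaces, so the Open Mapping Theorem yields a bounded inverse, whence $\|u\|_{L^{2}(\Omega)}\le C\|\dbar u\|_{L^{2}(\Omega)}$ for $u\in V$. For ``(ii)$\Rightarrow$(i)'': if $u_{n}\in\dom(\dbar)$ with $\dbar u_{n}\to f$ in $L^{2}(\Omega)$, replace each $u_{n}$ by $u_{n}-Pu_{n}$, where $P$ is the orthogonal projection onto the closed subspace $A^{2}(\Omega)\subset\dom(\dbar)$; this leaves $\dbar u_{n}$ unchanged and puts each term in $V$. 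By (ii), $\{u_{n}-Pu_{n}\}$ is Cauchy in $L^{2}(\Omega)$, hence converges to some $v$; since $\dbar$ is closed, $v\in\dom(\dbar)$ and $\dbar v=f$, so $f\in\operatorname{range}(\dbar)$.

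For ``(ii)$\Rightarrow$(iv)'': given $f\in\operatorname{range}(\dbar)$, choose $u\in\dom(\dbar)$ with $\dbar u=f$ and set $v:=u-Pu\in V$; then $\dbar v=f$ and $\|v\|_{L^{2}(\Omega)}\le C\|f\|_{L^{2}(\Omega)}$ by (ii). For ``(iv)$\Rightarrow$(ii)'': given $u\in\dom(\dbar)$ with $u\perp\ker\dbar$, apply (iv) to $f:=\dbar u$ to obtain $v$ with $\dbar v=\dbar u$ and $\|v\|_{L^{2}(\Omega)}\le C\|\dbar u\|_{L^{2}(\Omega)}$; then $u-v\in\ker\dbar$, so $\|u\|_{L^{2}(\Omega)}^{2}=(u,v)_{L^{2}(\Omega)}\le\|u\|_{L^{2}(\Omega)}\|v\|_{L^{2}(\Omega)}$, and therefore $\|u\|_{L^{2}(\Omega)}\le\|v\|_{L^{2}(\Omega)}\le C\|\dbar u\|_{L^{2}(\Omega)}$.

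Finally, for ``(i)$\Leftrightarrow$(iii)'': the adjoint $\dbarstar$ of the closed densely defined operator $\dbar$ is itself closed and densely defined (with adjoint $\dbar$), and $\ker\dbarstar$ is a closed subspace, so the equivalence ``(i)$\Leftrightarrow$(ii)'' applies verbatim with $\dbarstar$ replacing $\dbar$: $\operatorname{range}(\dbarstar)$ is closed if and only if the inequality in (iii) holds. By Banach's Closed Range Theorem, $\operatorname{range}(\dbar)$ is closed if and only if $\operatorname{range}(\dbarstar)$ is, which closes the cycle. I do not expect a genuine obstacle here; the only points needing care are matters of domain bookkeeping — that $Pu\in\dom(\dbar)$ because $A^{2}(\Omega)\subset\dom(\dbar)$, that the graph norm makes the relevant subspaces Banach, and that $\dbarstar$ inherits enough structure from $\dbar$ to let the argument be reused.
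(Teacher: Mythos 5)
Your proof is correct, and it is organized differently from the paper's. The paper simply cites H\"ormander's Theorem~1.1.1 for the equivalence of (i)--(iii), obtains (iii)$\Rightarrow$(iv) from H\"ormander's abstract duality theorem (Theorem~1.1.4), and proves (iv)$\Rightarrow$(ii) by introducing the $L^2$-minimal solution $w_0$ and showing $u=w_0$. You instead make the whole cycle self-contained: (i)$\Leftrightarrow$(ii) via the Open Mapping Theorem on the graph-norm Hilbert space (which is in substance the proof of H\"ormander's Theorem~1.1.1), (ii)$\Leftrightarrow$(iv) directly in both directions, and (i)$\Leftrightarrow$(iii) by running the same argument for $\dbarstar$ and invoking Banach's Closed Range Theorem, thereby avoiding the duality lemma altogether. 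Your (iv)$\Rightarrow$(ii) step is also cleaner than the paper's: rather than constructing the minimal solution, you note $u-v\in\ker\dbar$ and use $\|u\|^2=(u,v)\le\|u\|\,\|v\|$, which is exactly the orthogonality the paper extracts more laboriously. The trade-off is that your route leans on two black boxes of abstract functional analysis (Open Mapping and Closed Range Theorems) where the paper leans on two of H\"ormander's; both are legitimate, and your domain bookkeeping (that $A^2(\Omega)\subset\dom(\dbar)$ so $Pu\in\dom(\dbar)$, that $V$ is graph-norm closed, and that $\dbarstar$ is closed with closed kernel) is exactly the care the argument needs.
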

These equivalences are well-known, and, in fact, higher
dimensional analogs of (i)--(iv) are true.
For the convenience of the reader, we give either references or
short arguments for the proofs of
Proposition \ref{P:CRpropeq}.

\begin{proof}
  The equivalences of (i)--(iii) are proved 
   in \cite[Theorem~1.1.1]{Hormander65}. 
 
To see that (iv) implies (ii), let $u\in\dom(\dbar)$ with
$u\perp\ker\dbar$ be given. By (iv)
there exists a $v\in L^2(\Omega)$ such that
 $\dbar v=\dbar u$ in the distributional sense and  
 $$\|v\|_{L^{2}(\Omega)}\leq C\|\dbar u\|_{L^{2}(\Omega)}.$$
 Note that for any $h\in A^2(\Omega)$,  $w=v+h$ also satisfies 
$\dbar w=f$ in the distributional sense. Since $A^2(\Omega)$ is
a closed subspace of
$L^2(\Omega)$, it follows that there exists a
$L^2(\Omega)$-minimal solution $w_0$ to the
$\dbar$-equation with data $f$. It is easy to show that $w_0$ is
orthogonal to $A^2(\Omega)$.
 Since $\dbar(u-w_0)=0$ in the distributional 
 sense, it follows from the ellipticity of $\dbar$ 
on functions that $u-w_0\in A^{2}(\Omega)$. Since $u-w_0$ is
also in $(A^{2}(\Omega))^{\perp}$, it follows that $u=w_0$ so
that (ii) holds.
  
The implication (iii)$\Rightarrow$(iv) follows from a standard
duality argument, see Theorem~1.1.4 in
\cite[Theorem~1.1.4]{Hormander65}
with $A=\frac{1}{C}\text{Id}$, $T=\dbar$, $S=0$,
$H_{1}=L^{2}(\Omega)=H_{2}$, $H_{3}=0$ and
  $F=(\ker\dbarstar)^{\perp}$. 
\end{proof}

The constants in (ii)--(iv) of Proposition \ref{P:CRpropeq} may
be chosen to be the same. For the
best possible constant, we introduce the following notation.

\begin{definition}
  Let $\Omega$ be an open set in  $\mathbb{C}$. 
Then $\dbar$ is said to have closed range in $L^{2}(\Omega)$
with
  constant $C$ if
  $$\|u\|_{L^{2}(\Omega)}\leq C\|\dbar u\|_{L^{2}(\Omega)}$$
holds for all $u\in \dom(\dbar)\cap A^{2}(\Omega)^{\perp}$. In
that case, set
  $$\mathfrak{C}(\Omega)
=\inf\left\{C:\|u\|_{L^{2}(\Omega)}\leq C\|\dbar
u\|_{L^{2}(\Omega)} \;\;
\forall\;u\in\dom(\dbar) \cap A^{2}(\Omega)^{\perp}\right\}.$$ If the closed range property for
$\dbar$ does not hold in
    $L^{2}(\Omega)$, we say that $\mathfrak{C}(\Omega)=\infty$.
\end{definition} 

\medskip


\subsection{Terminology from potential theory in the
plane}\label{SS:potentialtheory}
Let $\mu$ be a finite Borel measure with compact support in
$\mathbb{C}$. The potential,
$p_{\mu}$, associated to $\mu$ is defined by
$$p_{\mu}(z)=\int_{\mathbb{C}}\ln|z-w|\;d\mu(w).$$
The energy, $I_{\mu}$, of $\mu$ is given by
$$I_{\mu}=\int_{\mathbb{C}}p_{\mu(z)}\;d\mu(z)=\int_{\mathbb{C}}\int_{\mathbb{C}}\ln|z-w|\;d\mu(w)\;d\mu(z).$$

A set $E\subset\mathbb{C}$ is called polar if the energy of
every non-trival, finite Borel measure
with compact support in $E$ is $-\infty$.
If for a compact set $K\subset\mathbb{C}$, there is a finite
Borel probability measure $\nu$ with
support in $K$ such that $$I_{\nu}=\sup\{I_{\mu}:\mu \text{
finite Borel probability measure with
support in } K\},$$
then $\nu$ is said to be an equilibrium measure for $K$. Any
compact set has an equilibrium
measure, see, e.g., \cite[Theorem 3.3.2]{Rans95}. Moreover, this
equilibrium measure is unique for
any non-polar compact set, see \cite[Theorem 3.7.6]{Rans95}. The
logarithmic capacity of a
compact, non-polar set $K$ is defined as $\lcap{K}=e^{I_{\nu}}$,
where $\nu$ is the equilibrium
measure of $K$. If $K$ is compact and polar, then $\lcap(K)=0$.
For a general set
$E\subset\mathbb{C}$, the logarithmic capacity, $\lcap(E)$, of
$E$ is defined as $\sup
e^{I_{\mu}}$ for $\mu$ is a finite Borel probability measure
with compact support in $E$. Note
that a set $E$ is polar if and only if $\lcap(E)=0$.

That the notion of positive logarithmic capacity comes into play
for the description of the
dimension of the Bergman space can be seen through the following
observation. Let
$K\subset\mathbb{C}$ be a compact, non-polar set, and $\mu$ the
associated equilibrium measure.
Then $p_{\mu}$ is a non-constant function, which is harmonic on
$K^{c}$, and bounded from below by
$\ln(\lcap(K))$ by Frostman's theorem. Thus $e^{-p_{\mu}}$ is a
bounded, smooth, subharmonic,
non-harmonic function on $K^{c}$. Hence it is a good candidate
for the construction of subharmonic
functions in part (4) of \ref{T:Bergmanspace}.

This construction is also used in the proof of the necessity of
the existence of bounded, strictly
subharmonic functions for the closed range property to hold for
$\dbar$, see part (4) of
Theorem~\ref{T:main}. To achieve this strict subharmonicity we
need compact sets, contained in the
complement, and of sufficiently large logarithmic capacity, to
be somewhat regularly distributed
over the complex plane. This vague description can be made
precise using the terminology of
capacity inradius as introduced in the first section.
Recall that for a set $\Omega\subset\mathbb{C}$, the capacity
inradius of $\Omega$ is defined by
$$\rho_{\lcap}(\Omega)=\sup\left\{R\geq0:\;\forall\;\delta>0\;\exists
\;z\in\mathbb{C}\text{ such
that }
\lcap\left(\mathbb{D}(z,R)\cap\Omega^{c}\right)<\delta\right\}.$$
Note that finiteness of the capacity inradius of $\Omega$ means
that for any
$M>\rho_{\lcap}(\Omega)$ there is a $\delta>0$ such that for any
point in $z\in\Omega$ there is a
set in the complement of $\Omega$, whose logarithmic capacity is
larger than $\delta$ while its
distance to $z$ is less than $M$. For instance, both
$\rho_{\lcap}(\mathbb{C})$ and
$\rho_{\lcap}\left(\mathbb{C}\setminus
\left(\mathbb{Z}+\sqrt{-1}\,\mathbb{Z})\right)\right)$ are
infinite. However, if for given $\epsilon>0$, $K_{j,\ell}$ is
the disc of radius $\epsilon$
centered at $j+\sqrt{-1}\,\ell$ or a line segment of length
$\epsilon$ containing
$j+\sqrt{-1}\,\ell$, then
$\rho(\mathbb{C}\setminus\bigcup_{j,\ell\in\mathbb{Z}
}K_{j,\ell})$ is
finite. We note that in the case of the removed discs, the
Poincar{\'e}--Dirichlet inequality is
known to hold, see part (ii) in Proposition 2.1 in \cite{Sou00}
and references therein. It appears
to be new that, as a consequence of Theorem \ref{T:main}, the
Poincar{\'e}--Dirichlet inequality
is true in the removed line segments case as well.


\subsection{The Poincar{\'e}--Dirichlet inequality}
Let $\Omega\subset\mathbb{C}$ be an open set. 
The Poincar{\'e}--Dirichlet inequality is said to hold on
$\Omega$, if there exists a constant
$C>0$ such that
\begin{align}\label{E:Poincare}
\|v\|_{L^{2}(\Omega)}\leq C\|\nabla
v\|_{L^{2}(\Omega)}\;\;\sjump\;\forall\;v\in
H_{0}^{1}(\Omega).
\end{align}
Whenever \eqref{E:Poincare} holds, it is customary to consider
\begin{align}\label{E:Rayleigh}
\lambda_{1}(\Omega)=\min\Biggl\{\frac{\|\nabla\varphi\|^{2}_{L^{2}(\Omega)}}{\|\varphi\|^{2}_{L^{2}(\Omega)}}:
\varphi\in H_{0}^{1}(\Omega),\;\varphi\not\equiv 0\Biggr\}.
\end{align}

This notation stems from the fact that $\lambda_{1}(\Omega)$ is
the smallest eigenvalue for the
Dirichlet--Laplacian.
In fact, if $\lambda_{1}(\Omega)$ in \eqref{E:Rayleigh} is
attained at $\psi\in
H_{0}^{1}(\Omega)$, then, for fixed $\varphi\in
H_{0}^{1}(\Omega)$, the function
$$f_{\varphi}(t)=\frac{\left\|\nabla\left(\psi+t\varphi
\right)\right\|_{L^{2}(\Omega)}^{2}}{\left\|\psi+t\varphi\right\|_{L^{2}(\Omega)}^{2}}$$
is differentiable near $t=0$ and has a critical point there.
Unraveling the equation
$f_{\varphi}'(0)=0$
for all $\varphi\in H_{0}^{1}(\Omega)$ then leads to observing
that
$\psi$ is a distributional solution to the boundary value
problem
\begin{align}\label{E:DirichletLaplacian}
 \begin{cases} 
      \triangle\psi+\lambda\psi=0\text{ on }\Omega\\
      \psi|_{b\Omega}=0 
   \end{cases}
\end{align}
for $\lambda=\lambda_{1}(\Omega)$. 

Furthermore, we note that, if $\Omega$ has smooth boundary and
$\psi\in\mathcal{C}^{\infty}(\overline{\Omega})$ with $\psi=0$
on $bD$,
then integration by parts yields
\begin{align}\label{E:RayleighLaplace}
\lambda_{1}(\Omega)\leq
\frac{\|\nabla\psi\|^{2}_{L^{2}(\Omega)}}{\|\psi\|^{2}_{L^{2}(\Omega)}}
=\frac{(-\triangle\psi,\psi)_{L^{2}(\Omega)}}{\|\psi\|^{2}_{L^{2}(\Omega)}}
  \leq
\frac{\|\triangle\psi
\|_{L^{2}(\Omega)}}{\|\psi\|_{L^{2}(\Omega)}}.
\end{align}

\medskip


\subsection{Basic characteristics of the closed range property
for $\dbar$}
\begin{proposition}\label{P:basicprops}
  Let $\Omega\subset\mathbb{C}$ be an open set.
  \begin{itemize}
\item[(a)] Invariance under rigid transformations: If $\Omega'$
is obtained from $\Omega$ by
translation, rotation, reflection, or a combination thereof,
then
$\mathfrak{C}(\Omega')=\mathfrak{C}(\Omega)$.
\item[(b)] Linearity under scaling:
$r\mathfrak{C}(\Omega)=\mathfrak{C}(r\Omega)$ for
  $r\Omega=\{rz:z\in\Omega\}$ and $r>0$.
\item[(c)] Invariance under polar sets:
$\mathfrak{C}(\Omega)=\mathfrak{C}(\Omega')$ for any open
set
$\Omega'\subset\Omega$ such that $\Omega\setminus\Omega'$ is
polar.
   \end{itemize}
\end{proposition}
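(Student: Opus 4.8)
The plan is to prove each of the three statements separately, in each case by reducing the inequality $\|u\|_{L^2} \le \mathfrak{C}\|\dbar u\|_{L^2}$ for $u \in \dom(\dbar)\cap A^2(\Omega)^\perp$ to the corresponding inequality on the transformed (resp. subset) domain. For part (a), I would observe that a rigid transformation $\Phi$ of $\mathbb{C}$ (translation, rotation, reflection, or composition) induces a unitary isomorphism $\Phi^*: L^2(\Omega') \to L^2(\Omega)$, $\Phi^* f = f\circ\Phi$, which preserves $L^2$-norms since $\Phi$ has Jacobian of absolute value $1$. The key computation is that this map intertwines $\dbar$ with $\dbar$ (or $\partial$, in the reflection case): for $z \mapsto az+b$ with $|a|=1$ we have $(f\circ\Phi)_{\bar z} = \bar a \,(f_{\bar z}\circ\Phi)$, and for the conjugation $z\mapsto\bar z$ we get $(f\circ\Phi)_{\bar z} = (f_z)\circ\Phi$; in either case $\|\dbar(\Phi^* f)\|_{L^2(\Omega)} = \|\dbar f\|_{L^2(\Omega')}$ (using that $\|\partial f\|=\|\dbar\bar f\|$ and $\|f\| = \|\bar f\|$ to handle the reflection). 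Since $\Phi^*$ also maps $A^2(\Omega')$ onto $A^2(\Omega)$ (holomorphic functions pull back to holomorphic or antiholomorphic, and conjugation is an $L^2$-isometry), it maps $A^2(\Omega')^\perp$ onto $A^2(\Omega)^\perp$, and the inequalities transfer verbatim, giving $\mathfrak{C}(\Omega') = \mathfrak{C}(\Omega)$.

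For part (b), the scaling map $z \mapsto rz$ sends $\Omega$ to $r\Omega$; pulling back via $f \mapsto f(rz)$ multiplies the $L^2$-norm by $r^{-1}$ (from the change of variables $dA(rz) = r^2\,dA(z)$) and multiplies $\dbar$ by $r$ (chain rule), so $\|u\|_{L^2(\Omega)} \le \mathfrak{C}\|\dbar u\|_{L^2(\Omega)}$ becomes, after substituting $u(z) = v(rz)$ with $v$ ranging over the appropriate space on $r\Omega$, the inequality $r^{-1}\|v\|_{L^2(r\Omega)} \le \mathfrak{C}\, r\cdot r^{-1}\|\dbar v\|_{L^2(r\Omega)}$, i.e. $\|v\|_{L^2(r\Omega)} \le r\mathfrak{C}\|\dbar v\|_{L^2(r\Omega)}$; again the pullback carries $A^2$ to $A^2$ and hence orthogonal complements to orthogonal complements, so $\mathfrak{C}(r\Omega) = r\,\mathfrak{C}(\Omega)$.

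For part (c), the point is that a polar set $P = \Omega\setminus\Omega'$ has zero logarithmic capacity, hence zero area, so $L^2(\Omega)$ and $L^2(\Omega')$ are canonically identified as Hilbert spaces by restriction/extension-by-zero, and this identification preserves norms. The essential claim is that this identification intertwines the maximal $\dbar$-operators, i.e. $\dom(\dbar_{\Omega}) = \dom(\dbar_{\Omega'})$ under this identification, with the same image; equivalently, if $u \in L^2(\Omega')$ has $\dbar u \in L^2(\Omega')$ (distributionally on $\Omega'$), then the zero-extension has the same distributional $\dbar$ across $P$. This follows because a polar set is removable for $L^2$ functions with $L^2$ distributional $\dbar$-derivative — this is the removability of polar (in particular, zero-capacity) sets, a standard fact in the theory: sets of zero logarithmic capacity are removable for bounded harmonic functions and more generally do not support distributional mass for such Sobolev-type classes. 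Granting that, $A^2(\Omega) = A^2(\Omega')$ under the identification (holomorphic functions extend across polar sets by the same removability), so the orthogonal complements agree and the defining inequalities for $\mathfrak{C}$ are literally the same inequality, whence $\mathfrak{C}(\Omega) = \mathfrak{C}(\Omega')$.

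I expect the main obstacle to be the removability assertion underlying part (c): one must justify that a polar set is negligible not merely for the holomorphic functions (classical, e.g. via boundedness of the potential and Riemann's removable singularity theorem applied on a cover) but for the full domain of the maximal $\dbar$, i.e. that $u \in L^2_{\mathrm{loc}}$ with $u_{\bar z} \in L^2_{\mathrm{loc}}$ on $\Omega\setminus P$ automatically has $u_{\bar z}\in L^2_{\mathrm{loc}}$ on $\Omega$ with no singular part along $P$. The cleanest route is to note that such $u$ is, after subtracting a local solution $w$ of $w_{\bar z} = u_{\bar z}$ (which exists in $L^2$ locally and is continuous), holomorphic off $P$, and then invoke removability of polar sets for holomorphic functions together with the fact that $P$ has zero area so $w$ is unaffected; combined with the $H^1_0$ description of $\dom(\dbarstar)$ recalled in the preliminaries, one concludes the operators and their adjoints match. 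Parts (a) and (b) I expect to be entirely routine change-of-variables bookkeeping.
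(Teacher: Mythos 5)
Your proposal is correct and follows essentially the same route as the paper: parts (a) and (b) by change-of-variables bookkeeping (with reflection reduced to conjugation), and part (c) by noting that a polar set has zero area and that polar sets are removable for $L^2$ holomorphic functions, so that $A^{2}(\Omega)=A^{2}(\Omega')$ and the defining inequalities coincide. If anything you are more explicit than the paper about the one genuinely delicate point in (c) --- that the zero-extension identification must also match the domains of the maximal $\dbar$-operators, not just the Bergman spaces --- and your subtract-a-local-solution argument for that removability is sound (though the local solution $w$ of $w_{\bar z}=u_{\bar z}$ with $L^2$ data need not be continuous, only $L^2_{\mathrm{loc}}$, which is all the argument requires).
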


\begin{remark}
(i) If $\Omega\subset\mathbb{C}$ is a bounded open set, then
$\mathfrak{C}(\Omega)<\infty$
   by the implication 
``(4)$\Rightarrow$(1)'' Theorem \ref{T:main} with, say,
$\varphi(z)=|z|^{2}$, see \cite[Theorem
2.2.1$'$]{Hormander65}.
  
 (ii) Note that  (b) implies that 
$$\mathfrak{C}(\mathbb{C})=\mathfrak{C}(r\mathbb{C})=r\mathfrak{C}(\mathbb{C})$$
for all $r>0$. Hence either $\mathfrak{C}(\mathbb{C})$ equals
$0$ or $\infty$. If
$\mathfrak{C}(\mathbb{C})$
was $0$, then by (ii) of Proposition \ref{P:CRpropeq} it would
follow that
$\|\nabla u\|_{L^2(\mathbb{C})}=2\|u_z\|_{L^2(\mathbb{C})}=0$
for all
$u\in\mathcal{C}_c^{\infty}(\mathbb{C})$. As this is clearly not
true, it follows that
$\mathfrak{C}(\mathbb{C})=\infty$, i.e., $\dbar$ does not have
closed range in $L^2(\mathbb{C})$.

Since $\mathbb{Z}+\sqrt{-1}\,\mathbb{Z}$ is a polar set, (c)
implies that $\dbar$ for
$$\Omega=\mathbb{C}\setminus\left(\mathbb{Z}+\sqrt{-1}\,\mathbb{Z}\right)$$
  also does not have closed range in $L^2(\Omega)$.
  
(iii) An argument similar to the one given in (ii) yields that
$\dbar$ for the upper half plane
does not have closed
range. Since $\mathfrak{C}(\mathbb{D}(0,1))$ is finite by (i),
it follows that the closed range
property is not invariant
  under biholomorphic equivalences.
\end{remark}

\begin{proof}
Translations and rotations are biholomorphic maps for which the
absolute value of its Jacobian is
$1$.
Hence, invariance readily follows. Any reflection may be written
as a composition of translations,
rotations, and
complex conjugation. So it remains to show the invariance under
complex conjugation. Denote
complex conjugation by $T$, i.e., $Tz=\bar{z}$ for
$z\in\mathbb{C}$. Write
$\Omega_{T}=\{z\in\mathbb{C}:Tz\in \Omega\}$. Observe that the
map $u\mapsto T\circ u\circ T$
yields an isometry of $L^{2}(\Omega)$ and $L^{2}(\Omega_{T})$ as
well as $A^{2}(\Omega)$ and
$A^{2}(\Omega_{T})$. Further, one easily verifies that
$\dbar(T\circ u\circ T)=T\circ\dbar u\circ
T$. Hence $\dbar$ has closed range in $L^{2}(\Omega)$ iff it has
closed range in
$L^{2}(\Omega_{T})$, and
$\mathfrak{C}(\Omega)=\mathfrak{C}(\Omega_{T})$.
   
Part (b) follows straightforwardly from the fact that
$\dbar(u(rz))=r(\dbar u)(rz)$ for any scalar
$r$.

  To prove part (c), note first that 
  $u$, $\dbar u\in L^{2}(\Omega')$ whenever 
$u$, $\dbar u\in L^{2}(\Omega)$, since $\Omega\setminus\Omega'$
is of Lebesgue measure zero.
   Moreover, since $\Omega\setminus\Omega'$ 
  is polar, 
$A^{2}(\Omega)=A^{2}(\Omega')$, see e.g., part (c) of Theorem
9.5 in \cite{Con95}. Hence
$A^{2}(\Omega)^{\perp}=A^{2}(\Omega')^{\perp}$. Therefore,
$\mathfrak{C}(\Omega)=\mathfrak{C}(\Omega')$.
\end{proof}
 
 \begin{proposition}\label{P:Poincare}
Let $\Omega\subset\mathbb{C}$ be an open set. The
Poincar{\'e}--Dirichlet inequality holds on
$\Omega$ if and only
   if $\dbar$ has 
   closed range in $L^{2}(\Omega)$.
Moreover,
$$\mathfrak{C}(\Omega)=\frac{2}{\sqrt{\lambda_{1}(\Omega)}}.$$
 \end{proposition}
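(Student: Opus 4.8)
The plan is to route everything through the adjoint characterization in Proposition~\ref{P:CRpropeq}(iii), using Lemma~\ref{L:Straube} to reduce to compactly supported test functions. The first ingredient is the elementary identity
\[
  \dbarstar v = -v_{z}, \qquad \|\dbarstar v\|_{L^{2}(\Omega)} = \|v_{z}\|_{L^{2}(\Omega)} = \tfrac{1}{2}\|\nabla v\|_{L^{2}(\Omega)} \qquad (v\in\mathcal{C}^{\infty}_{c}(\Omega)).
\]
The first equality is integration by parts (no boundary term, since $v$ has compact support); the middle one is the standard fact $\int_{\Omega}|v_{z}|^{2}=\int_{\Omega}|v_{\bar z}|^{2}$ combined with the pointwise relation $|v_{z}|^{2}+|v_{\bar z}|^{2}=\tfrac{1}{2}|\nabla v|^{2}$. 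In particular the graph norm of $\dbarstar$ restricted to $\mathcal{C}^{\infty}_{c}(\Omega)$ is equivalent to $\|\cdot\|_{H^{1}(\Omega)}$, as already noted in the text.

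Next I would show that $\ker\dbarstar=\{0\}$. Let $v\in\dom(\dbarstar)$ with $\dbarstar v=0$. By Lemma~\ref{L:Straube} pick $v_{n}\in\mathcal{C}^{\infty}_{c}(\Omega)$ converging to $v$ in the graph norm; then $v_{n}\to v$ in $L^{2}(\Omega)$ while, by the identity above, $\|\nabla v_{n}\|_{L^{2}(\Omega)}=2\|\dbarstar v_{n}\|_{L^{2}(\Omega)}\to 2\|\dbarstar v\|_{L^{2}(\Omega)}=0$. Hence $(v_{n})$ is Cauchy in $H^{1}(\Omega)$, so $v\in H^{1}_{0}(\Omega)$ with $\nabla v=0$, i.e.\ $v$ is locally constant on $\Omega$. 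Extending $v$ by zero to $\mathbb{C}$ produces a function in $H^{1}(\mathbb{C})$ with vanishing gradient, hence a.e.\ constant; since that constant function vanishes on $\mathbb{C}\setminus\Omega$ and is square-integrable, it is identically $0$ (immediate if $\mathbb{C}\setminus\Omega$ has positive measure, and otherwise because an $L^{2}(\mathbb{C})$ constant is $0$). Thus $v=0$.

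Finally I would assemble the equivalence together with the constant. By Proposition~\ref{P:CRpropeq}(iii), and the remark there that the constants in (ii)--(iv) may be taken equal, $\dbar$ has closed range in $L^{2}(\Omega)$ with constant $C$ if and only if $\|v\|_{L^{2}(\Omega)}\le C\|\dbarstar v\|_{L^{2}(\Omega)}$ for all $v\in\dom(\dbarstar)$ (orthogonality to $\ker\dbarstar$ being vacuous by the previous step). Since $\mathcal{C}^{\infty}_{c}(\Omega)$ is dense in $\dom(\dbarstar)$ in the graph norm, this holds, with the same best constant, iff it holds for all $v\in\mathcal{C}^{\infty}_{c}(\Omega)$, which by the first identity is exactly $\|v\|_{L^{2}(\Omega)}\le\frac{C}{2}\|\nabla v\|_{L^{2}(\Omega)}$ for all $v\in\mathcal{C}^{\infty}_{c}(\Omega)$; by density of $\mathcal{C}^{\infty}_{c}(\Omega)$ in $H^{1}_{0}(\Omega)$ this is the Poincar\'{e}--Dirichlet inequality with constant $C/2$. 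Hence the closed range property is equivalent to the Poincar\'{e}--Dirichlet inequality, and $\mathfrak{C}(\Omega)=2\sup\{\|v\|_{L^{2}(\Omega)}/\|\nabla v\|_{L^{2}(\Omega)}:v\in H^{1}_{0}(\Omega)\setminus\{0\}\}=2/\sqrt{\lambda_{1}(\Omega)}$ by \eqref{E:Rayleigh} (when neither property holds, both sides are $+\infty$). The only step needing genuine, if modest, care is $\ker\dbarstar=\{0\}$ — equivalently, that a locally constant function in $H^{1}_{0}(\Omega)\cap L^{2}(\Omega)$ vanishes — together with bookkeeping of the factor $2$ relating the $\dbar$-constant to the Dirichlet/Rayleigh constant.
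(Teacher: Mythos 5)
Your proof is correct and follows essentially the same route as the paper: both directions pass through the adjoint characterization (Proposition~\ref{P:CRpropeq}(iii)), the identity $\|\dbarstar\varphi\|_{L^{2}(\Omega)}=\tfrac{1}{2}\|\nabla\varphi\|_{L^{2}(\Omega)}$ for $\varphi\in\mathcal{C}^{\infty}_{c}(\Omega)$ (the paper's Lemma~\ref{L:closedrangecompactsupport}), and the density Lemma~\ref{L:Straube}. The only real difference is organizational: you establish $\ker\dbarstar=\{0\}$ unconditionally at the outset, whereas the paper proves the same fact inside the converse direction by writing a test function as $\phi_{1}+\phi_{2}$ with $\phi_{2}\in\ker\dbarstar$ and showing $\phi_{2}=0$ via Lemma~\ref{L:H01notdense}; your zero-extension to the connected set $\mathbb{C}$ also neatly sidesteps the locally-constant-versus-constant point that the paper addresses with a mollification remark about connected components.
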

 
In our proof of Proposition \ref{P:Poincare} the following two
lemmata are essential.
 
 \begin{lemma}\label{L:closedrangecompactsupport}
Let $\Omega\subset\mathbb{C}$ be an open set. If the
Poincar{\'e}--Dirichlet inequality holds,
then
  \begin{align*}
    \left\|\varphi\right\|_{L^{2}(\Omega)}\leq 
\frac{2}{\sqrt{\lambda_{1}(\Omega)}}
\left\|\dbarstar\varphi\right\|_{L^{2}(\Omega)}
\sjump\;\;\forall\; \varphi\in \mathcal{C}^{\infty}_{c}(\Omega).  \end{align*}
 \end{lemma}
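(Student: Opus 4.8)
The plan is to reduce the asserted estimate directly to the Poincar\'e--Dirichlet inequality \eqref{E:Poincare}, or rather to the Rayleigh quotient characterization \eqref{E:Rayleigh} of $\lambda_{1}(\Omega)$, via two elementary identities valid for $\varphi\in\mathcal{C}^{\infty}_{c}(\Omega)$: (a) $\varphi\in\dom(\dbarstar)$ with $\dbarstar\varphi=-\varphi_{z}$; and (b) $\|\nabla\varphi\|_{L^{2}(\Omega)}=2\|\varphi_{z}\|_{L^{2}(\Omega)}$. Granting (a) and (b), and using that $\mathcal{C}^{\infty}_{c}(\Omega)\subset H^{1}_{0}(\Omega)$ by the very definition of $H^1_0(\Omega)$, the Rayleigh quotient bound $\|\varphi\|_{L^{2}(\Omega)}^{2}\le\lambda_{1}(\Omega)^{-1}\|\nabla\varphi\|_{L^{2}(\Omega)}^{2}$ (trivial when $\varphi\equiv0$) yields $\|\varphi\|_{L^{2}(\Omega)}^{2}\le 4\lambda_{1}(\Omega)^{-1}\|\varphi_{z}\|_{L^{2}(\Omega)}^{2}=4\lambda_{1}(\Omega)^{-1}\|\dbarstar\varphi\|_{L^{2}(\Omega)}^{2}$, which is exactly the claim.

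To prove (a): fix $\varphi\in\mathcal{C}^{\infty}_{c}(\Omega)$. For every $u\in\dom(\dbar)$ the definition of the distributional derivative (with test function $\overline{\varphi}$) gives $(\dbar u,\varphi)_{L^{2}(\Omega)}=\int_{\Omega}u_{\bar{z}}\,\overline{\varphi}=-\int_{\Omega}u\,\overline{\varphi_{z}}=(u,-\varphi_{z})_{L^{2}(\Omega)}$, there being no boundary contribution since $\supp\varphi\Subset\Omega$. In particular $|(\dbar u,\varphi)_{L^{2}(\Omega)}|\le\|\varphi_{z}\|_{L^{2}(\Omega)}\|u\|_{L^{2}(\Omega)}$, so $\varphi\in\dom(\dbarstar)$ and $\dbarstar\varphi=-\varphi_{z}$.

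To prove (b): from $\varphi_{x}=\varphi_{z}+\varphi_{\bar{z}}$ and $\varphi_{y}=\sqrt{-1}\,(\varphi_{z}-\varphi_{\bar{z}})$ one gets the pointwise identity $|\varphi_{x}|^{2}+|\varphi_{y}|^{2}=2\bigl(|\varphi_{z}|^{2}+|\varphi_{\bar{z}}|^{2}\bigr)$, hence $\|\nabla\varphi\|_{L^{2}(\Omega)}^{2}=2\bigl(\|\varphi_{z}\|_{L^{2}(\Omega)}^{2}+\|\varphi_{\bar{z}}\|_{L^{2}(\Omega)}^{2}\bigr)$. It then suffices to check $\|\varphi_{z}\|_{L^{2}(\Omega)}=\|\varphi_{\bar{z}}\|_{L^{2}(\Omega)}$, which follows from integrating by parts (again with no boundary terms): $\int_{\Omega}|\varphi_{z}|^{2}=-\int_{\Omega}\varphi\,\overline{\partial_{\bar z}\varphi_{z}}=-\tfrac14\int_{\Omega}\varphi\,\overline{\triangle\varphi}$, and the same value is obtained for $\int_{\Omega}|\varphi_{\bar{z}}|^{2}$ after interchanging the roles of $z$ and $\bar z$. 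Therefore $\|\nabla\varphi\|_{L^{2}(\Omega)}^{2}=4\|\varphi_{z}\|_{L^{2}(\Omega)}^{2}$.

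There is no real obstacle: the proof is bookkeeping with the Wirtinger derivatives, and the only point needing (entirely routine) attention is the vanishing of boundary terms in the two integrations by parts, which is immediate from $\varphi\in\mathcal{C}^{\infty}_{c}(\Omega)$. If convenient I would record the adjoint relation $\int_{\Omega}(\partial_{z}f)\,\overline{g}=-\int_{\Omega}f\,\overline{\partial_{\bar z}g}$ (for compactly supported $f$, smooth $g$, or vice versa) once and invoke it in both (a) and (b).
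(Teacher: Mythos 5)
Your proposal is correct and is essentially the paper's own proof: the paper also reduces the claim to the identity $\|\nabla\varphi\|_{L^{2}(\Omega)}^{2}=4\|\dbarstar\varphi\|_{L^{2}(\Omega)}^{2}$ for $\varphi\in\mathcal{C}^{\infty}_{c}(\Omega)$ via integration by parts (writing it as $(-\triangle\varphi,\varphi)=4(\dbar\dbarstar\varphi,\varphi)$) and then invokes the Rayleigh quotient characterization of $\lambda_{1}(\Omega)$. You merely unpack the same computation into the steps $\dbarstar\varphi=-\varphi_{z}$ and $\|\varphi_{z}\|_{L^{2}(\Omega)}=\|\varphi_{\bar z}\|_{L^{2}(\Omega)}$, all of which are fine.
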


\begin{proof}[Proof of Lemma \ref{L:closedrangecompactsupport}]
  Let $\Omega\subset\mathbb{C}$ be an open set. Then for 
$\varphi\in\mathcal{C}_{c}^{\infty}(\Omega)$, it follows from
integration by parts that
  \begin{align*}
      \left\|\nabla\varphi\right\|_{L^{2}(\Omega)}^{2}
    =(-\triangle\varphi,\varphi)_{L^{2}(\Omega)}
    =4(\dbar\dbarstar\varphi,\varphi)_{L^{2}(\Omega)}=
    4\|\dbarstar\varphi\|_{L^{2}(\Omega)}^{2}.
  \end{align*}
\end{proof}

\begin{lemma}\label{L:H01notdense}
Let $\Omega\subset\mathbb{C}$ be an open set. Then non-trivial
constant functions are not
contained in $H_0^1(\Omega)$.
\end{lemma}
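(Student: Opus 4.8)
The plan is to transplant the problem to the whole plane and then invoke the chain rule for Sobolev functions. If $\Omega=\emptyset$ there is nothing to prove, so assume $\Omega\neq\emptyset$ and suppose, toward a contradiction, that some constant $c\neq 0$ belongs to $H_0^1(\Omega)$. Since $H_0^1(\Omega)\subset L^2(\Omega)$, the constant $c$ is square-integrable over $\Omega$, which forces $|\Omega|<\infty$; dividing by $c$ we may assume that $1\in H_0^1(\Omega)$. Fix $\varphi_n\in\mathcal C_c^\infty(\Omega)$ with $\varphi_n\to 1$ in the $H^1(\Omega)$-norm.

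First I would extend by zero. Each $\varphi_n$, extended by $0$ outside $\Omega$, is a function $\tilde\varphi_n\in\mathcal C_c^\infty(\mathbb C)$, since its support is a compact subset of $\Omega$, and $\|\tilde\varphi_n\|_{H^1(\mathbb C)}=\|\varphi_n\|_{H^1(\Omega)}$. Hence $\{\tilde\varphi_n\}$ is Cauchy in the complete space $H^1(\mathbb C)$ and converges there to some $v$. Comparing the limit on $\Omega$, where $\tilde\varphi_n=\varphi_n\to 1$, with the limit on $\mathbb C\setminus\Omega$, where $\tilde\varphi_n\equiv 0$, shows that $v=\mathbf 1_\Omega$ almost everywhere; in particular $\mathbf 1_\Omega\in H^1(\mathbb C)$, so $\nabla\mathbf 1_\Omega$ is represented by an $L^2(\mathbb C)$ vector field.

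Next I would exploit the idempotency $\mathbf 1_\Omega^2=\mathbf 1_\Omega$. Since $\mathbf 1_\Omega\in H^1(\mathbb C)\cap L^\infty(\mathbb C)$, the chain rule for Sobolev functions gives $\nabla\mathbf 1_\Omega=2\,\mathbf 1_\Omega\nabla\mathbf 1_\Omega$ almost everywhere, that is, $(1-2\,\mathbf 1_\Omega)\nabla\mathbf 1_\Omega=0$ almost everywhere; as $1-2\,\mathbf 1_\Omega$ takes only the values $\pm 1$, this forces $\nabla\mathbf 1_\Omega=0$ almost everywhere. A function in $H^1(\mathbb C)$ with vanishing gradient on the connected set $\mathbb C$ is almost everywhere equal to a constant, e.g.\ by mollification, and since it lies in $L^2(\mathbb C)$ that constant must be $0$. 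Thus $|\Omega|=0$, which contradicts the fact that $\Omega$ is a nonempty open set, and the lemma follows.

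The only step that is not pure bookkeeping is the chain rule $\nabla(u^2)=2u\nabla u$ for $u\in H^1\cap L^\infty$ --- equivalently, the statement that a Sobolev function on $\mathbb C$ taking only the values $0$ and $1$ is almost everywhere constant --- which I expect to be the one place to be careful; it is standard, and can be proved by mollification and dominated convergence or quoted from Stampacchia's theorem. A heavier alternative would be to observe, via the structure theorem for $BV$ functions, that $\nabla\mathbf 1_\Omega$ is a vector measure carried by $\partial\Omega$, a Lebesgue-null set, so that any $L^1$ representative of it must vanish; but the chain-rule route is the lightest.
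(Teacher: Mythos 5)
Your proof is correct, and after the common first step (extending by zero along the approximating sequence to conclude $c\chi_\Omega\in H^1(\mathbb{R}^2)$, which is exactly how the paper begins) you diverge from the paper's argument in an essential way. The paper derives the contradiction by slicing: Fubini gives that almost every horizontal slice of $\widehat f=c\chi_\Omega$ lies in $H^1(\mathbb{R})$, hence agrees a.e.\ with a continuous function; since $\Omega$ is open, some such slice meets $\Omega$ in a set of positive length, forcing the continuous representative (which takes only the values $0$ and $c$) to be identically $c$ on $\mathbb{R}$, contradicting square-integrability. You instead use the idempotency $\chi_\Omega^2=\chi_\Omega$ together with the chain rule for bounded Sobolev functions (equivalently, Stampacchia's theorem that $\nabla u=0$ a.e.\ on level sets) to conclude $\nabla\chi_\Omega=0$ a.e., hence $\chi_\Omega$ is a.e.\ constant on $\mathbb{C}$ and therefore zero, contradicting $|\Omega|>0$. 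Both routes are sound; yours invokes a slightly heavier standard lemma but is dimension-free and avoids the one-dimensional Sobolev embedding and the geometric step of locating a good slice, while the paper's slicing argument is more elementary in the tools it quotes. Your reduction to $c=1$ and the observation that $|\Omega|<\infty$ are fine but not actually needed for the rest of your argument.
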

\begin{proof}
  For a function $f\in H_0^1(\Omega)$, write $\widehat{f}$ to denote the function obtained from extending $f$  to be $0$ outside of $\Omega$.  
  Note that 
   $\widehat{f}\in H^1(\mathbb{R}^2)$.
  In fact, since $f\in H_0^1(\Omega)$, there exists a $\{\phi_n\}_{n\in\mathbb{N}}\subset\mathcal{C}_c^{\infty}(\Omega)$ such that $\phi_n
  \longrightarrow f$  in the Sobolev-$1$-norm on $\Omega$ as $n\to\infty$. As $\{\phi_n\}_{n\in\mathbb{N}}$ may be considered a subset of 
  $\mathcal{C}_c^{\infty}(\mathbb{R}^2)$, 
  we get that $\{\phi_n\}_{n\in\mathbb{N}}$ is a Cauchy sequence in the Sobolev-$1$-norm on $\mathbb{R}^2$. 
  It then follows that  $\phi_n\longrightarrow \widehat{f}$  in the Sobolev-$1$-norm on $\mathbb{R}^2$ as $n\to\infty$.
  
  Now suppose $f\equiv c\neq 0$. Then $\widehat{f}=c\chi_\Omega$, where $\chi_\Omega$ is the characteristic function of $\Omega$. Fubini's theorem
  yields that there is a set $\mathcal{N}\subset\mathbb{R}$ of $1$-dimensional measure $0$,
   such that $\widehat{f}(.,y_0)\in H^1(\mathbb{R})$ for all $y_0\notin\mathcal{N}$. This implies that $f(.,y_0)$ is equal to a continuous function almost everywhere in $\mathbb{R}$
 for all $y_0\notin\mathcal{N}$. Since $\Omega$ is open, it follows that there exists a $y_0\notin\mathcal{N}$   
  such that the intersection of the line $y=y_0$ with $\Omega$ is of positive one-dimensional Lebesgues measure. Thus $\widehat{f}(.,y_0)=c$ almost 
  everywhere. This is a contradiction as $c\notin L^2(\mathbb{R})$.
\end{proof}

We are now set to prove Proposition \ref{P:Poincare}.
 
 \begin{proof}[Proof of Proposition \ref{P:Poincare}]
Suppose the Poincar{\'e}--Dirichlet inequality holds. The proof for $\dbar$ having closed range is based on
Lemmata~\ref{L:closedrangecompactsupport}
and \ref{L:Straube}.
To wit, let $u\in\dom(\dbarstar)\cap(\ker\dbarstar)^{\perp}$ and
$\epsilon>0$ be given. Then by
the density result, there exists a
$\varphi\in\mathcal{C}^{\infty}_{c}(\Omega)$ such that
$$\left(\|u-\varphi\|^{2}_{L^{2}(\Omega)}+\|\dbarstar
u-\dbarstar\varphi\|_{L^{2}(\Omega)}^{2}\right)<\epsilon.$$
Hence using Lemma \ref{L:closedrangecompactsupport}, after an
application of the triangle inequality,
yields
 \begin{align*}
\|u\|_{L^{2}(\Omega)}\leq\|u-\varphi\|_{L^{2}(\Omega)}+\|\varphi\|_{L^{2}(\Omega)}
\leq
\epsilon+\frac{2}{\sqrt{\lambda_{1}(\Omega)}}\|\dbarstar\varphi\|_{L^{2}(\Omega)}.
 \end{align*}
 It then follows from the density result that
 \begin{align}
\|u\|_{L^{2}(\Omega)}\leq
\epsilon\left(1+\frac{2}{\sqrt{\lambda_{1}(\Omega)}}\right)+\frac{2}{\sqrt{\lambda_{1}(\Omega)}}\|\dbarstar
u\|_{L^{2}(\Omega)},
 \end{align}
Since $\epsilon$ was chosen arbitrarily, it follows from
Proposition \ref{P:CRpropeq}, that
$\dbar$ has closed range in $L^{2}(\Omega)$ and
$\mathfrak{C}(\Omega)\leq\frac{2}{\sqrt{\lambda_{1}(\Omega)}}$. 

Now suppose that $\dbar$ has closed range in $L^2(\Omega)$ with constant $C$. Note first that
the adjoint operator 
$\dbarstar$ is closed because $\dbar$ is. Hence $\ker\dbarstar$ is 
closed in $L^2(\Omega)$, and we obtain the orthogonal decomposition:
$$L^2(\Omega)=\ker\dbarstar \oplus (\ker\dbarstar)^{\perp}.$$
Now let $\phi\in\mathcal{C}^\infty_c(\Omega)$. By the above decomposition of 
$L^2(\Omega)$, $\phi$ may 
be written as $\phi_1+\phi_2$ for some $\phi_1\in(\ker\dbarstar)^\perp$ and 
$\phi_2\in\ker\dbarstar$. Note that both $\phi$ and $\phi_2$ are in $\dom(\dbarstar)$. 
Since the latter is a vector space, it follows that $\phi_1\in\dom(\dbarstar)$ as well.
Next, by assumption $\dbar$ has closed range in $L^2(\Omega)$. Therefore
\begin{align*}
  \|\phi\|_{L^2(\Omega)}^2= \|\phi_1\|_{L^2(\Omega)}^2
  + \|\phi_2\|_{L^2(\Omega)}^2
 & \leq C^2\|\dbarstar\phi_1\|^2_{L^2(\Omega)}+ \|\phi_2\|_{L^2(\Omega)}^2\\
  &= C^2\|\dbarstar\phi\|^2_{L^2(\Omega)}+ \|\phi_2\|_{L^2(\Omega)}^2.
\end{align*}
We shall employ Lemmata \ref{L:Straube} and \ref{L:H01notdense} to show that 
$\phi_2=0$ almost everywhere. Note first that by Lemma \ref{L:Straube}, there exists a sequence 
$\{f_n\}\subset\mathcal{C}_c^{\infty}(\Omega)$ such that
$$\|\phi_2-f_n\|_{L^2(\Omega)}+\|\dbarstar\phi_2-\dbarstar f_n\|_{L^2(\Omega)}
\longrightarrow 0 \text{ as } n\to\infty.$$
Therefore $\|\dbarstar f_n\|_{L^2(\Omega)}$ goes to $0$ as 
$n$ goes to $\infty$. However, since $f_n\in\mathcal{C}_c^\infty(\Omega)$ 
integrating by parts twice yields $$\|\dbarstar f_n\|_{L^2(\Omega)}=
\|\nabla f_n\|_{L^2(\Omega)}$$ for all $n\in\mathbb{N}$, i.e., $\nabla f_n$ 
converges to $0$ in $L^2(\Omega)$. Hence $\{f_n\}_{n\in\mathbb{N}}$ is a Cauchy
sequence in $H_0^1(\Omega)$. Therefore, there exists a function 
$f\in H_0^1(\Omega)$ such that $f_n\longrightarrow f$ in $L^2(\Omega)$ and
$\nabla f=0$ almost everywhere. However, using a mollification argument, one can show that  $f$ is constant 
almost everywhere on each connected component of $\Omega$. This is a contradiction 
to Lemma \ref{L:H01notdense} unless 
$f=0$ almost everywhere. Since $f$ and $\phi_2$ are both the $L^2(\Omega)$-limit of 
$f_n$, it follows that $\phi_2=0$ almost everywhere. 
Thus
$$\|\phi\|_{L^2(\Omega)}\leq C\|\dbarstar\phi\|_{L^2(\Omega)}$$
holds. That is, the Poincar{\'e}--Dirichlet inequality holds and 
$\mathfrak{C}(\Omega)\geq\frac{2}{\sqrt{\lambda_{1}(\Omega)}}.$
\end{proof}

\begin{corollary}\label{C:monotonicity}
Let $\Omega'\subset\Omega\subset\mathbb{C}$ be open sets. Then
$\mathfrak{C}(\Omega')\leq
\mathfrak{C}(\Omega)$.
\end{corollary}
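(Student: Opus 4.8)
The plan is to deduce the monotonicity from the eigenvalue identity $\mathfrak{C}(\Omega)=2/\sqrt{\lambda_{1}(\Omega)}$ of Proposition~\ref{P:Poincare}, together with the classical domain monotonicity of the lowest Dirichlet eigenvalue. If $\mathfrak{C}(\Omega)=\infty$ there is nothing to prove, so I would assume $\mathfrak{C}(\Omega)<\infty$; by Proposition~\ref{P:Poincare} the Poincar{\'e}--Dirichlet inequality then holds on $\Omega$, say with constant $C$.

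First I would observe that $\mathcal{C}^{\infty}_{c}(\Omega')\subseteq\mathcal{C}^{\infty}_{c}(\Omega)$, since a function whose support is compact in $\Omega'$ also has support compact in $\Omega$, and for such a function the $L^{2}$-norm and gradient-norm over $\Omega'$ coincide with those over $\Omega$ (the function vanishes on $\Omega\setminus\Omega'$). Hence $\|v\|_{L^{2}(\Omega')}\leq C\|\nabla v\|_{L^{2}(\Omega')}$ for every $v\in\mathcal{C}^{\infty}_{c}(\Omega')$, and then, by the definition of $H_{0}^{1}(\Omega')$ as the completion of $\mathcal{C}^{\infty}_{c}(\Omega')$ in the Sobolev-$1$-norm, for every $v\in H_{0}^{1}(\Omega')$; so the Poincar{\'e}--Dirichlet inequality holds on $\Omega'$ and $\mathfrak{C}(\Omega')<\infty$. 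Next, given $\varphi\in H_{0}^{1}(\Omega')$ with $\varphi\not\equiv0$, I would approximate $\varphi$ in the Sobolev-$1$-norm on $\Omega'$ by $\phi_{n}\in\mathcal{C}^{\infty}_{c}(\Omega')$; viewed inside $\mathcal{C}^{\infty}_{c}(\Omega)$ the sequence $\{\phi_{n}\}$ is Cauchy in the Sobolev-$1$-norm on $\Omega$, hence converges there to some $\widehat{\varphi}\in H_{0}^{1}(\Omega)$, which is necessarily the extension of $\varphi$ by zero. (This extension step is exactly the one carried out in the proof of Lemma~\ref{L:H01notdense}.) Thus $\widehat{\varphi}\not\equiv0$ and its Rayleigh quotient over $\Omega$ equals that of $\varphi$ over $\Omega'$, so taking the infimum over all such $\varphi$ yields $\lambda_{1}(\Omega')\geq\lambda_{1}(\Omega)$.

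Combining this with Proposition~\ref{P:Poincare} gives
\[
\mathfrak{C}(\Omega')=\frac{2}{\sqrt{\lambda_{1}(\Omega')}}\leq\frac{2}{\sqrt{\lambda_{1}(\Omega)}}=\mathfrak{C}(\Omega),
\]
as desired. I do not expect a genuine obstacle here: the only point needing a little care is the isometric inclusion $H_{0}^{1}(\Omega')\hookrightarrow H_{0}^{1}(\Omega)$ by zero-extension, and this has already been established (over $\mathbb{R}^{2}$, but the same argument works over $\Omega$) in the proof of Lemma~\ref{L:H01notdense}; the remainder is bookkeeping with Proposition~\ref{P:Poincare}. A slightly more hands-on alternative would work directly with $\dbarstar$ on $\mathcal{C}^{\infty}_{c}$, using Lemma~\ref{L:closedrangecompactsupport} and the inclusion $\mathcal{C}^{\infty}_{c}(\Omega')\subseteq\mathcal{C}^{\infty}_{c}(\Omega)$, but this ultimately reproduces the same computation.
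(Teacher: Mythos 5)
Your proposal is correct and follows exactly the paper's route: the paper disposes of the case $\mathfrak{C}(\Omega)=\infty$ trivially and otherwise invokes Proposition~\ref{P:Poincare} together with the ``easily verified fact'' that $\lambda_{1}(\Omega)\leq\lambda_{1}(\Omega')$. You have merely spelled out that verification (zero-extension of $\mathcal{C}^{\infty}_{c}(\Omega')$ test functions into $H_{0}^{1}(\Omega)$), which is a fine and correct way to fill in the detail the paper leaves to the reader.
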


\begin{proof}
  If $\mathfrak{C}(\Omega)=\infty$, the claim is trivially true. Otherwise, this follows from Proposition \ref{P:Poincare} and 
  the easily verified fact that 
  $\lambda_1(\Omega)\leq\lambda_1(\Omega')$.
\end{proof}

It also follows from Proposition \ref{P:Poincare}  that $\ker\dbarstar=\{0\}$ which yields a somewhat stronger version of the equivalence of 
(i) and (iv) in Proposition \ref{P:CRpropeq}.

\begin{corollary}
  Let $\Omega\subset\mathbb{C}$ be an open set. Then the following are equivalent.
  \begin{itemize}
    \item[(i)]  $\dbar$ has closed range in $L^{2}(\Omega)$.
    \item[(iv$'$)] There exists a constant $C>0$ such that for all $f\in L^2(\Omega)$    
    there exists a $v\in L^2(\Omega)$
such that $\dbar v=f$ holds in the distributional sense and $$\|v\|_{L^{2}(\Omega)}\leq
C\|f\|_{L^{2}(\Omega)}.$$
  \end{itemize}
\end{corollary}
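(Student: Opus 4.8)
The implication (iv$'$)$\Rightarrow$(i) needs no new work: (iv$'$) is a strengthening of part~(iv) of Proposition~\ref{P:CRpropeq} (simply restrict attention to those $f$ lying in the range of $\dbar$), and (iv)$\Rightarrow$(i) was already established there. So the content is the implication (i)$\Rightarrow$(iv$'$), and the plan is to upgrade part~(iv) of Proposition~\ref{P:CRpropeq} from ``every $f$ in the range of $\dbar$'' to ``every $f\in L^{2}(\Omega)$''; this amounts to showing that $\dbar$ is surjective, i.e.\ that its (closed) range is all of $L^{2}(\Omega)$.

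The key observation is that $\ker\dbarstar=\{0\}$. This is exactly what is verified in the second half of the proof of Proposition~\ref{P:Poincare}, where it is shown, using only Lemmata~\ref{L:Straube}, \ref{L:closedrangecompactsupport}, and \ref{L:H01notdense}, that the $\ker\dbarstar$-component $\phi_{2}$ of any $\phi\in\mathcal{C}^{\infty}_{c}(\Omega)$ vanishes; hence $\mathcal{C}^{\infty}_{c}(\Omega)\subseteq(\ker\dbarstar)^{\perp}$, and density of $\mathcal{C}^{\infty}_{c}(\Omega)$ in $L^{2}(\Omega)$ forces $\ker\dbarstar=\{0\}$. Equivalently, one argues directly: if $\dbarstar v=0$, choose $f_{n}\in\mathcal{C}^{\infty}_{c}(\Omega)$ converging to $v$ in the graph norm of $\dbarstar$ by Lemma~\ref{L:Straube}; then $\|\nabla f_{n}\|_{L^{2}(\Omega)}^{2}=4\|\dbarstar f_{n}\|_{L^{2}(\Omega)}^{2}\to 0$, so $\{f_{n}\}$ is Cauchy in $H_{0}^{1}(\Omega)$ with a locally constant limit, which must be $0$ by Lemma~\ref{L:H01notdense}, whence $v=0$.

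With this in hand I would invoke the standard fact that a closed, densely defined operator $T$ satisfies $\overline{\operatorname{Ran}T}=(\ker T^{\star})^{\perp}$. Since $\dbar$ is closed and densely defined on $L^{2}(\Omega)$ and, by hypothesis~(i), has closed range, it follows that $\operatorname{Ran}\dbar=(\ker\dbarstar)^{\perp}=L^{2}(\Omega)$. Then part~(iv) of Proposition~\ref{P:CRpropeq}, now applicable to every $f\in\operatorname{Ran}\dbar=L^{2}(\Omega)$, produces the desired constant $C>0$ together with a distributional solution $v\in L^{2}(\Omega)$ of $\dbar v=f$ satisfying $\|v\|_{L^{2}(\Omega)}\le C\|f\|_{L^{2}(\Omega)}$, which is precisely (iv$'$).

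The crux is the triviality of $\ker\dbarstar$; this is where the planar (indeed, two-dimensional) character of the problem enters, via Lemma~\ref{L:H01notdense} on the absence of nonzero constants from $H_{0}^{1}(\Omega)$. For the corresponding $\dbarstar$ acting on $(0,1)$-forms in $\mathbb{C}^{n}$ with $n\ge 2$ this kernel need not be trivial, so the present corollary is special to one variable. Everything past this point is routine bookkeeping with the range.
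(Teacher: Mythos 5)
Your argument is correct and follows essentially the same route as the paper: both establish $\ker\dbarstar=\{0\}$ via Lemma~\ref{L:Straube} (together with the computation $\|\nabla f_n\|_{L^2(\Omega)}=2\|\dbarstar f_n\|_{L^2(\Omega)}$ and Lemma~\ref{L:H01notdense}), then conclude $\operatorname{Ran}\dbar=(\ker\dbarstar)^{\perp}=L^{2}(\Omega)$ from closedness of the range, and finish by invoking part~(iv) of Proposition~\ref{P:CRpropeq}. Your spelled-out verification that $\ker\dbarstar$ is trivial is precisely the argument already embedded in the second half of the proof of Proposition~\ref{P:Poincare}, which the paper simply cites.
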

\begin{proof}
  Only the implication ``(i)$\Rightarrow$(iv')" needs to be proved. Note first that Proposition \ref{P:Poincare} together 
  with the density result
  Lemma \ref{L:Straube} implies that $\ker\dbarstar=\{0\}$. Thus $(\ker\dbarstar)^\perp=L^2(\Omega)$. However,
   $(\ker\dbarstar)^\perp$ equals the closure of the range of $\dbar$, which is closed by assumption. Thus $L^2(\Omega)$ is the range of
   $\dbar$. The implication  ``(i)$\Rightarrow$(iv')" now follows from  ``(i)$\Rightarrow$(iv)" in Proposition \ref{P:CRpropeq}.
\end{proof}

In the following, we observe that the best closed
range constant $\mathfrak{C}(.)$
satisfies a continuity from below property.

\begin{proposition}\label{P:closedness}
Let $\{\Omega_{j}\}_{j\in\mathbb{N}}$ be an increasing sequence
of open sets, set
  $\Omega=\cup_{j\in\mathbb{N}}\Omega_{j}$. If $\dbar$ has 
  closed range in $L^{2}(\Omega_{j})$ with  
constant $C$ for all $j\in \mathbb{N}$, then $\dbar$ has closed
range in $L^{2}(\Omega)$ with
constant $C$. In particular,
$\mathfrak{C}(\Omega)=\lim_{j\to\infty}\mathfrak{C}(\Omega_{j})$.
\end{proposition}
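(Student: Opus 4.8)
The plan is to use the characterization of the closed range property via the Poincar\'e--Dirichlet inequality (Proposition~\ref{P:Poincare}) together with the variational description of $\lambda_1$ in \eqref{E:Rayleigh}, which turns the statement into an elementary monotonicity-plus-density argument for the first Dirichlet eigenvalue. By Proposition~\ref{P:Poincare}, $\dbar$ has closed range in $L^2(\Omega_j)$ with constant $C$ precisely when $\lambda_1(\Omega_j)\geq 4/C^2$, and likewise for $\Omega$; so it suffices to show $\lambda_1(\Omega)=\lim_{j\to\infty}\lambda_1(\Omega_j)$, with the limit existing because the sequence $\{\lambda_1(\Omega_j)\}$ is nonincreasing by Corollary~\ref{C:monotonicity} (applied to $\Omega_j\subset\Omega_{j+1}$), and bounded below by $4/C^2>0$ by hypothesis. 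In particular $\lambda_1(\Omega)>0$, so the Poincar\'e--Dirichlet inequality holds on $\Omega$ and $\dbar$ has closed range there, with $\mathfrak{C}(\Omega)=2/\sqrt{\lambda_1(\Omega)}=\lim_j 2/\sqrt{\lambda_1(\Omega_j)}=\lim_j\mathfrak{C}(\Omega_j)$ once the eigenvalue limit is established.

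First I would record the inequality $\lambda_1(\Omega)\leq\lambda_1(\Omega_j)$ for every $j$, which follows from $\mathcal{C}^\infty_c(\Omega_j)\subset\mathcal{C}^\infty_c(\Omega)$ and hence $H_0^1(\Omega_j)\subset H_0^1(\Omega)$, making the infimum defining $\lambda_1(\Omega)$ taken over a larger set (this is exactly Corollary~\ref{C:monotonicity} restated for $\lambda_1$). Consequently $\lambda_1(\Omega)\leq\inf_j\lambda_1(\Omega_j)=\lim_j\lambda_1(\Omega_j)=:\lambda_\infty\geq 4/C^2$. For the reverse inequality I would take an arbitrary $\varphi\in\mathcal{C}^\infty_c(\Omega)$ with $\varphi\not\equiv 0$; since $\supp\varphi$ is a compact subset of $\Omega=\bigcup_j\Omega_j$ and the $\Omega_j$ are increasing and open, there is a $j_0$ with $\supp\varphi\subset\Omega_{j_0}$, so $\varphi\in\mathcal{C}^\infty_c(\Omega_{j_0})\subset H_0^1(\Omega_{j_0})$. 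Then
\begin{align*}
\frac{\|\nabla\varphi\|^2_{L^2(\Omega)}}{\|\varphi\|^2_{L^2(\Omega)}}
=\frac{\|\nabla\varphi\|^2_{L^2(\Omega_{j_0})}}{\|\varphi\|^2_{L^2(\Omega_{j_0})}}
\geq\lambda_1(\Omega_{j_0})\geq\lambda_\infty.
\end{align*}
Since $\mathcal{C}^\infty_c(\Omega)$ is dense in $H_0^1(\Omega)$ and the Rayleigh quotient is continuous in the $H^1$-norm on $H_0^1(\Omega)\setminus\{0\}$, taking the infimum over $\varphi\in\mathcal{C}^\infty_c(\Omega)$ and then over all of $H_0^1(\Omega)$ gives $\lambda_1(\Omega)\geq\lambda_\infty$, completing the proof that $\lambda_1(\Omega)=\lambda_\infty$, hence $\mathfrak{C}(\Omega)=C$-admissible and $\mathfrak{C}(\Omega)=\lim_j\mathfrak{C}(\Omega_j)$.

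The only delicate point is the passage from the bound on the Rayleigh quotient over the dense subspace $\mathcal{C}^\infty_c(\Omega)$ to the bound over all of $H_0^1(\Omega)$; I expect this to be the main (though still routine) obstacle, and it is handled by noting that if $\varphi_n\to\psi$ in $H^1$ with $\psi\not\equiv 0$ then $\|\nabla\varphi_n\|_{L^2}\to\|\nabla\psi\|_{L^2}$ and $\|\varphi_n\|_{L^2}\to\|\psi\|_{L^2}\neq 0$, so the quotients converge. Alternatively one can bypass \eqref{E:Rayleigh} entirely and argue directly on the inequality \eqref{E:Poincare}: every $v\in H_0^1(\Omega)$ is an $H^1$-limit of $\phi_n\in\mathcal{C}^\infty_c(\Omega)$, each $\phi_n$ lies in some $H_0^1(\Omega_{j_n})$ where the inequality $\|\phi_n\|_{L^2}\leq C\|\nabla\phi_n\|_{L^2}$ holds with the uniform constant $C$, and letting $n\to\infty$ preserves the inequality for $v$; this shows directly that the Poincar\'e--Dirichlet inequality holds on $\Omega$ with the same constant $C$, and Proposition~\ref{P:Poincare} then yields both conclusions. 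The ``in particular'' statement $\mathfrak{C}(\Omega)=\lim_j\mathfrak{C}(\Omega_j)$ follows because $\mathfrak{C}(\Omega_j)$ is nondecreasing in $j$ and bounded above by $\mathfrak{C}(\Omega)$ (Corollary~\ref{C:monotonicity}), while the first part of the proposition shows $\mathfrak{C}(\Omega)\leq\sup_j\mathfrak{C}(\Omega_j)=\lim_j\mathfrak{C}(\Omega_j)$.
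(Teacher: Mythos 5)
Your proof is correct, but it takes a genuinely different route from the paper's. You reduce everything to the Poincar\'e--Dirichlet inequality via Proposition~\ref{P:Poincare} (which does precede Proposition~\ref{P:closedness} in the paper, so there is no circularity) and then prove the elementary fact that $\lambda_1$ is continuous under increasing unions: every $\varphi\in\mathcal{C}^\infty_c(\Omega)$ has compact support and hence lives in some $\Omega_{j_0}$, so its Rayleigh quotient is at least $\inf_j\lambda_1(\Omega_j)$, and density of $\mathcal{C}^\infty_c(\Omega)$ in $H_0^1(\Omega)$ together with continuity of the quotient transfers the bound to all of $H_0^1(\Omega)$. The paper instead argues directly in $L^2$ with the $\dbar$-machinery: for $u\in\dom(\dbar)\cap A^2(\Omega)^\perp$ it writes $u=(u-B^{j}u)+B^{j}u$ on $\Omega_j$, applies the closed range estimate on $\Omega_j$ to $u-B^{j}u$, and kills the term $\|B^{j}u\|_{\Omega_j}$ along a subsequence by a weak-compactness argument (the weak limit of $\chi_{j_k}B^{j_k}u$ is shown to be holomorphic on $\Omega$, hence orthogonal to $u$, forcing $\|B^{j_k}u\|_{\Omega_{j_k}}^2=(B^{j_k}u,u)_{\Omega_{j_k}}\to 0$). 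Your argument is shorter and more elementary, but it is specific to the planar setting where the identity $\mathfrak{C}(\Omega)=2/\sqrt{\lambda_1(\Omega)}$ is available; the paper's Bergman-projection argument is a pure $\dbar$ argument of the kind that has a chance of surviving in higher dimensions, which is the program announced in the introduction. Two small points to tidy: in your ``alternative'' direct argument the Poincar\'e--Dirichlet constant on $\Omega_{j_n}$ is $C/2$, not $C$ (since $\|\dbarstar\phi\|=\tfrac12\|\nabla\phi\|$ for $\phi\in\mathcal{C}^\infty_c$); and you should read the ``min'' in \eqref{E:Rayleigh} as an infimum, since attainment is irrelevant to (and not needed for) your argument.
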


For the proof of Proposition \ref{P:closedness} we shall use the
Bergman projection. Recall that
for
$\Omega\subset\mathbb{C}$, the Bergman projection is the
orthogonal projection
$B^{\Omega}$ of $L^{2}(\Omega)$ onto its closed subspace
$A^{2}(\Omega)$. That is, $B^{\Omega}$
satisfies \begin{itemize}
  \item[(a)] $B^{\Omega}\circ B^{\Omega}=B^{\Omega}$, 
  \item[(b)] $B^{\Omega}h=h$ for all $h\in A^{2}(\Omega)$, 
  \item[(c)] $B^{\Omega}f-f$ is orthogonal to $A^{2}(\Omega)$.
\end{itemize}

\begin{proof}
Let $u\in \dom(\dbar)\cap\left(A^{2}(\Omega)\right)^{\perp}$ be
given. Throughout, write $B$ for
$B^{\Omega}$, $B^{j}$ for $B^{\Omega_{j}}$ and $\|.\|_{\Omega}$
for $\|.\|_{L^{2}(\Omega)}$,
$\|.\|_{\Omega_{j}}$ for $\|.\|_{L^{2}(\Omega_{j})}$. Then for
$j\in\mathbb{N}$,
\begin{align*}
\|u\|_{\Omega}\leq
\|u\|_{\Omega_{j}}+\|u\|_{\Omega\setminus\Omega_{j}}.
\end{align*}
Let $\chi_{j}$ be the characteristic function of $\Omega_{j}$,
and set
$f_{j}=(1-\chi_{j})u^{2}$. Then $f_{j}$ converges to $0$ almost
everywhere in
$\Omega$. Moreover, $|f_{j}|\leq|u|^{2}$ for all
$j\in\mathbb{N}$ and $|u|^{2}$ is in
$L^{1}(\Omega)$. It follows from the dominated convergence
theorem, that
$\lim_{j\to\infty}\int_{\Omega}
f_{j}dV=\int_{\Omega}\lim_{j\to\infty}f_{j}<\infty$, i.e.,
$$\lim_{j\to\infty}\|u\|_{\Omega\setminus\Omega_{j}}=0.$$
Therefore, for a given $\epsilon>0$ there exists a
$j_{0}\in\mathbb{N}$, such that
\begin{align*}
  \|u\|_{\Omega}\leq \|u\|_{\Omega_{j}}+\epsilon
\end{align*}
holds for all $j\geq j_{0}$. Hence
\begin{align*}
  \|u\|_{\Omega}\leq \left\|u-B^{j}u\right\|_{\Omega_{j}}
  +\left\|B^{j}u\right\|_{\Omega_{j}}+\epsilon
\end{align*}
for all $j\geq j_{0}$. Note that $u-B^{j}u$ is orthogonal to
$A^{2}(\Omega_{j})$. Since $u,
B^{j}u\in\dom(\dbar)$, so is $u-B^{j}u$.
As $\dbar$ has closed range in $L^{2}(\Omega_{j})$ with constant
$C$, it now follows
that
\begin{align*}
\left\|u-B^{j}u\right\|_{\Omega_{j}}\leq C\|\dbar
u\|_{\Omega_{j}},
\end{align*}
which yields
\begin{align}\label{I:redo}
\|u\|_{\Omega}\leq C\|\dbar
u\|_{\Omega}+\left\|B^{j}u\right\|_{\Omega_{j}}
   +\epsilon
\end{align}
for all $j\geq j_{0}$.

It remains to estimate the term
$\left\|B^{j}u\right\|_{\Omega_{j}}$. To that end, notice first
that the
 sequence $\{\chi_{j}B^{j}u\}_{j\in\mathbb{N}}$ is uniformly 
bounded in $L^{2}(\Omega)$ by $\|u\|_{L^{2}(\Omega)}$.
Thus it has a weakly convergent subsequence, say, 
$\{\chi_{j_{k}}B^{j_{k}}u\}_{k\in\mathbb{N}}$. That is, there
exists a $g\in L^{2}(\Omega)$ such
that
\begin{align}\label{E:weakconv}
\lim_{k\to\infty}\left(\chi_{j_{k}}B^{j_{k}}u-g,v\right)_{\Omega}=0
  \sjump\;\;\forall\; v\in L^{2}(\Omega).
\end{align}  
We shall first show that $g$ is holomorphic, and then use this
fact to derive that
$\|B^{j_{k}}u\|_{\Omega_{j_{k}}}$ converges to $0$ as $k$ tends
to $\infty$.
It follows from \eqref{E:weakconv} that
\begin{align}\label{E:weakconv2}
\lim_{k\to\infty}\left(\chi_{j_{k}}B^{j_{k}}u-g,\varphi_{z}
\right)_{\Omega}=0\sjump\forall\;\varphi\in\mathcal{C}_{c}^{\infty}(\Omega).
\end{align}
Since $\{\Omega_{j_{k}}\}_{k}$ is an increasing sequence of open
sets, it follows that for each
$\varphi\in\mathcal{C}^{\infty}_{c}(\Omega)$ there exists a
$j_{k_{0}}\in\mathbb{N}$ such that
$\supp\varphi\Subset\Omega_{j_{k}}$ for all $j_{k}\geq
j_{k_{0}}$. Let $S_{\varphi}$ be a smoothly
bounded open set such that
$\supp\varphi\Subset S_{\varphi}\Subset\Omega_{j_{k}}$ for all
$j_{k}\geq
j_{k_{0}}$. Then it follows from integration by parts that
\begin{align*}
  \left(\chi_{j_{k}}B^{j_{k}}u,\varphi_{z} \right)_{\Omega}=
  \left(B^{j_{k}}u,\varphi_{z} \right)_{S_{\varphi}}=
-\left(\left(B^{j_{k}}u\right)_{\bar{z}},\varphi
\right)_{S_{\varphi}}=0
\end{align*}
for all $j_{k}\geq j_{k_{0}}$. This, together with
\eqref{E:weakconv2}, implies that
$(g,\varphi_{z})_{\Omega}=0$
for all $\varphi\in\mathcal{C}_{c}^{\infty}(\Omega)$. Thus,
$g\in A^{2}(\Omega)$, in particular
$(g,u)_{\Omega}=0$.
 The weak convergence \eqref{E:weakconv} yields
\begin{align*}
0=\lim_{k\to\infty}(\chi_{j_{k}}B^{j_{k}}u-g,u)_{\Omega}&=\lim_{k\to\infty}(\chi_{j_{k}}B^{j_{k}}u,u)_{\Omega}\\
 &= \lim_{k\to\infty}(B^{j_{k}}u,u)_{\Omega_{j_{k}}}.
\end{align*}
Since $u-B^{j_{k}}u$ is orthogonal to $A^{2}(\Omega_{j_{k}})$
while
$B^{j_{k}}u\in A^{2}(\Omega_{j_{k}})$, it follows that
\begin{align*}
  0= \lim_{k\to\infty}(B^{j_{k}}u,u)_{\Omega_{j_{k}}}
=\lim_{k\to\infty}\left\|B^{j_{k}}u\right\|_{\Omega_{j_{k}}}^{2}.
\end{align*}
Now repeat all arguments leading up to the estimate
\eqref{I:redo} with $B^{j_{k}}$ instead of
$B^{j}$. Since $\left\|B^{j_{k}}u\right\|_{\Omega_{j_{k}}}$
tends to $0$ as $k\to\infty$, it
follows that
$\|u\|_{\Omega}\leq C\|\dbar u\|_{\Omega}$. 
\medskip

To show that $\mathfrak{C}(\Omega_{j})$ converges to
$\mathfrak{C}(\Omega)$ as $j\to\infty$, set
$C:=\sup\{\mathfrak{C}(\Omega_{j}):j\in\mathbb{N}\}$. By
hypothesis, $C<\infty$. The above
argument then yields
 $\mathfrak{C}(\Omega)\leq C$. However, the 
monotonicity property in Corollary \ref{C:monotonicity}, yields
$\mathfrak{C}(\Omega_{j})\leq\mathfrak{C}(\Omega)$.
Hence $C\leq\mathfrak{C}(\Omega)$. Thus $C=\mathfrak{C}(\Omega)$
holds, which completes the proof.
\end{proof}


\section{Proof of ``(1)$\Rightarrow$(3)'' of Theorem
\ref{T:main}}\label{S:necessity}
\subsection{Special case}\label{SS:specialcase}
The proof of ``(1)$\Rightarrow$(3)'' is done by contraposition,
i.e., we assume that (1) holds
while (3) does not. We shall first consider cases of open sets
for which (3) does not hold in a
particular manner, see hypothesis of Lemma \ref{L:specialcase}
and the example below. The proof of
``(1)$\Rightarrow$(3)'' for these open sets is a straightforward
consequence of Proposition
\ref{P:closedness}.

\begin{lemma}\label{L:specialcase}
Let $\Omega\subset\mathbb{C}$ be an open set. Suppose that for
all $M>0$, there exist a
sequence $\{\delta_{j}\}_{j\in\mathbb{N}}\subset\mathbb{R}^{+}$
with
$\lim_{j\to\infty}\delta_{j}=0$
  and $\{z_{M,j}\}_{j}\subset\mathbb{C}$ such that 
  \begin{itemize}
  \item[(i)] 
$\lcap\left(\Omega^{c}\cap
\mathbb{D}(z_{M,j},M)\right)<\delta_{j}$,
  \item[(ii)] the sequence  of open sets given by 
$\left(\Omega\cap
\mathbb{D}(z_{M,j},M)\right)-z_{M,j}\subset\mathbb{D}(0,M)$ is
increasing in
$j$.
  \end{itemize}
  Then $\dbar$ does not have closed range in $L^{2}(\Omega)$.
\end{lemma}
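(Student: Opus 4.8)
The strategy is to reduce the general hypothesis of Lemma~\ref{L:specialcase} to an application of Proposition~\ref{P:closedness} together with the scaling invariance of Proposition~\ref{P:basicprops}(b), via a comparison with the disc $\mathbb{D}(0,M)$ minus a polar-ish (small-capacity) set. First I would fix $M>0$ and consider the increasing sequence of open sets $G_{M,j}:=\bigl(\Omega\cap\mathbb{D}(z_{M,j},M)\bigr)-z_{M,j}\subset\mathbb{D}(0,M)$, which by hypothesis (ii) increases in $j$; let $G_M:=\bigcup_j G_{M,j}$. Since each $G_{M,j}\subset\Omega-z_{M,j}$ (up to translation), Corollary~\ref{C:monotonicity} and Proposition~\ref{P:basicprops}(a) give $\mathfrak{C}(G_{M,j})\leq\mathfrak{C}(\Omega)$. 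If $\dbar$ had closed range on $\Omega$ with constant $C=\mathfrak{C}(\Omega)<\infty$, then Proposition~\ref{P:closedness} applied to the increasing family $\{G_{M,j}\}_j$ would force $\mathfrak{C}(G_M)\leq C$ as well.

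Next I would estimate $\mathfrak{C}(G_M)$ from below and show it grows with $M$, contradicting the uniform bound $C$. The key point is that $\mathbb{D}(0,M)\setminus G_M$ has logarithmic capacity at most $\lim_{j\to\infty}\lcap\bigl(\Omega^c\cap\mathbb{D}(z_{M,j},M)\bigr)\leq\lim_j\delta_j=0$ by hypothesis (i) — here one uses that $G_M$ is an increasing union, so $\mathbb{D}(0,M)\setminus G_M=\bigcap_j\bigl(\mathbb{D}(0,M)\setminus G_{M,j}\bigr)$, and that logarithmic capacity of a decreasing intersection of compact (or Borel) sets is the limit of the capacities (continuity of capacity from above on a fixed bounded region). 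Hence $\mathbb{D}(0,M)\setminus G_M$ is polar, so by Proposition~\ref{P:basicprops}(c), $\mathfrak{C}(G_M)=\mathfrak{C}(\mathbb{D}(0,M))$. By scaling, Proposition~\ref{P:basicprops}(b), $\mathfrak{C}(\mathbb{D}(0,M))=M\,\mathfrak{C}(\mathbb{D}(0,1))$, and $\mathfrak{C}(\mathbb{D}(0,1))=2/\sqrt{\lambda_1(\mathbb{D}(0,1))}>0$ is a fixed positive constant by Proposition~\ref{P:Poincare}. Letting $M\to\infty$ yields $\mathfrak{C}(G_M)\to\infty$, contradicting $\mathfrak{C}(G_M)\leq C<\infty$. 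Therefore $\dbar$ does not have closed range in $L^2(\Omega)$.

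**The main obstacle.** The delicate step is the capacity argument: showing $\lcap\bigl(\mathbb{D}(0,M)\setminus G_M\bigr)=0$. Hypothesis (i) bounds $\lcap\bigl(\Omega^c\cap\mathbb{D}(z_{M,j},M)\bigr)$, i.e. (after translation) $\lcap\bigl((\Omega-z_{M,j})^c\cap\mathbb{D}(0,M)\bigr)$, but $\mathbb{D}(0,M)\setminus G_{M,j}$ is a priori larger than $(\Omega-z_{M,j})^c\cap\mathbb{D}(0,M)$ — it also contains points outside $\mathbb{D}(z_{M,j},M)-z_{M,j}$... except that that translated disc \emph{is} $\mathbb{D}(0,M)$, so in fact $\mathbb{D}(0,M)\setminus G_{M,j}=(\Omega-z_{M,j})^c\cap\mathbb{D}(0,M)$ exactly, and (i) gives $\lcap\bigl(\mathbb{D}(0,M)\setminus G_{M,j}\bigr)<\delta_j$. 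Then one needs the right continuity/monotonicity property of logarithmic capacity for the decreasing sequence of sets $E_j:=\mathbb{D}(0,M)\setminus G_{M,j}$ with $\bigcap_j E_j=\mathbb{D}(0,M)\setminus G_M$: since the $E_j$ are decreasing and $\lcap(E_j)<\delta_j\to0$, and capacity is monotone, $\lcap\bigl(\bigcap_j E_j\bigr)\leq\inf_j\lcap(E_j)=0$. (This last inequality is just monotonicity of capacity — $\bigcap_j E_j\subset E_j$ for every $j$ — so no deep continuity theorem is even needed.) I would cite \cite{Rans95} for the monotonicity of logarithmic capacity and for the fact that polar sets have capacity zero, and otherwise the argument is a clean assembly of the propositions already proved.
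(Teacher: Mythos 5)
Your proposal is correct and follows essentially the same route as the paper's proof: both reduce to the disc via translation/scaling invariance and monotonicity (Proposition~\ref{P:basicprops}, Corollary~\ref{C:monotonicity}), use continuity from below (Proposition~\ref{P:closedness}) on the increasing family, and invoke invariance under polar sets to identify the limiting constant with that of a disc, the only cosmetic difference being that the paper fixes $M$ with $C/M<\mathfrak{C}(\mathbb{D}(0,1))$ at the outset and contradicts on the unit disc, whereas you keep radius $M$ and let $M\to\infty$ at the end. No gaps.
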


\begin{example}
For each $(j,\ell) \in \mathbb{Z} \times \mathbb{Z}$, let
$K_{j,\ell}$ be the closed horizontal line segment of
length $\frac{\arctan(j)}{\pi}+\frac{1}{2}$ centered at $j +
\sqrt{-1}\,\ell$.
Let
$$
\Omega = \mathbb{C} \setminus \bigcup_{(j,\ell) \in \mathbb{Z}
\times
\mathbb{Z}} K_{j,\ell}.
$$
Then $\dbar$ does not have closed range in $L^{2}(\Omega)$ by
Lemma~\ref{L:specialcase}.
In fact, for any $m\in\mathbb{N}$, 
$$ 
  \left(\Omega\cap \mathbb{D}(-m,M)\right)+m
  \subset
  \left(\Omega\cap \mathbb{D}(-m-1,M)\right)+(m+1)
  \subset\mathbb{D}(0,M).
$$
Moreover, for any $j\in\mathbb{N}$ there exists an
$m_{j}\in\mathbb{N}$ such that
$$\lcap\left(\Omega^{c}\cap
\mathbb{D}(-m_{j},M)\right)\leq\frac{1}{j}.$$ Hence, conditions
(i)
and (ii) of Lemma~\ref{L:specialcase} are satisfied with
$z_{M,j}=-m_{j}$ and
$\delta_{j}=\frac{1}{j}$.

\begin{figure}[h!t]
\includegraphics{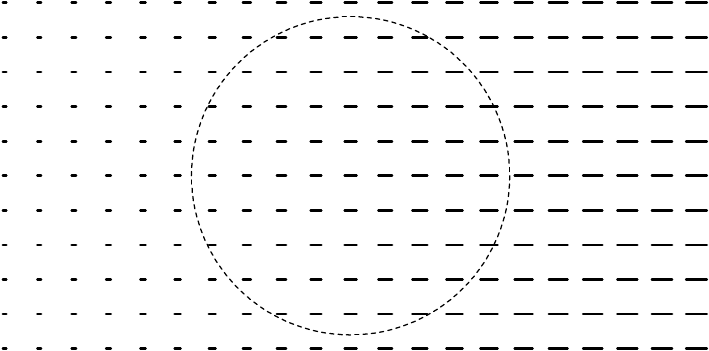}
\caption{A disc in $\Omega$.\label{lines:fig}}
\end{figure}

That $\dbar$ does not have closed range in $L^{2}(\Omega)$ can
also be seen more directly from
Propositions \ref{P:basicprops} and \ref{P:closedness}. To wit,
set $\Omega_m$ to be the shift of
$\Omega$ to the right
by $m$ units. It follows from part (a) of Proposition
\ref{P:basicprops} that
$\mathfrak{C}(\Omega)=\mathfrak{C}(\Omega_{m})$. Further, $
\Omega_{m} \subset \Omega_{m+1}$ for any $m \in \mathbb{Z}$. By
Proposition \ref{P:closedness}, it
now follows that
\begin{align}\label{E:latticeconstant}
  \mathfrak{C}(\Omega)=\mathfrak{C}\left(\Omega_{m}\right)
  =\mathfrak{C}\left(\bigcup_{m\in\mathbb{Z}}\Omega_{m}\right).
\end{align}
However, the complement of $\bigcup_{m\in\mathbb{Z}}\Omega_{m}$
is the lattice
$\mathbb{Z}+\sqrt{-1}\,\mathbb{Z}$, which is a polar set. By
part (c) of Proposition
\ref{P:basicprops}, it follows that
$$\mathfrak{C}(\mathbb{C})=\mathfrak{C}\left(\bigcup_{m\in\mathbb{Z}}\Omega_{m}\right).$$
Since $\mathfrak{C}(\mathbb{C})=\infty$, we obtain from
\eqref{E:latticeconstant} that
$\mathfrak{C}(\Omega)=\infty$.

\end{example}

\begin{proof}[Proof of Lemma \ref{L:specialcase}]
We suppose that $\dbar$ has closed range in $L^{2}(\Omega)$ with constant $C$. Choose $M>0$ such
that
  $\frac{C}{M}<\mathfrak{C}(\mathbb{D}(0,1))$.
  By hypothesis, there exist a sequence of positive scalars 
$\{\delta_{j}\}_{j\in\mathbb{N}}$ with
$\lim_{j\to\infty}\delta_{j}=0$
  and
  $\{z_{j}\}_{j\in\mathbb{N}}\in\mathbb{C}$ such that  
$$\lcap\left(\Omega^{c}\cap\mathbb{D}(z_{j},M)\right)<
\delta_{j}.$$

  Define $D_{j}$ to be the set obtained from translating 
$\Omega\cap\mathbb{D}(z_{j},M)$ by $-z_{j}$ and then scaling it
by a factor of $\frac{1}{M}$,
i.e.,
 $$D_{j}=\left\{z\in\mathbb{D}(0,1):Mz+z_{j}
\in\Omega\right\}.$$ Then by properties (a),(b) and (c) of
Proposition \ref{P:basicprops},
 \begin{align}\label{E:constantj}
   \mathfrak{C}(D_{j})\leq\frac{C}{M}.
 \end{align}
As the logarithmic capacity satisfies analogous properties, see
part (c) of
Theorem 5.1.2 in \cite{Rans95}, we also have
$\lcap(\mathbb{D}(0,1)\setminus
D_{j})\leq\frac{\delta_{j}}{M}$.
Since $D_{j}\subset D_{j+1}$, it follows from monotonicity, see
Theorem 5.1.2 (a) in
\cite{Rans95}, that
$$\lcap\left(\mathbb{D}(0,1)\setminus\bigcup_{j=1}^{\infty}D_{j}
\right)\leq
   \lcap\left(\mathbb{D}(0,1)\setminus D_{k} \right)
\leq\frac{\delta_{k}}{M}\sjump\;\;\forall\;k\in\mathbb{N}. $$ By part (c) of Proposition
\ref{P:basicprops} we then get that
$$\mathfrak{C}(\bigcup_{j=1}^{\infty}D_{j})=\mathfrak{C}(\mathbb{D}(0,1)).$$
Moreover, Proposition \ref{P:closedness} yields for any given
$\epsilon>0$
 a $j\in\mathbb{N}$ such that
$$\mathfrak{C}(D_{j})>\mathfrak{C}\left(\bigcup_{j=1}^{\infty}
D_{j}\right)-\epsilon.$$
 By \eqref{E:constantj}, it then follows that
$$\frac{C}{M}>\mathfrak{C}\left(\mathbb{D}(0,1)\right)-\epsilon.$$
This is a contradiction to the choice of $M$ for $\epsilon>0$
sufficiently small.
\end{proof}

\medskip

\subsection{General case}

\begin{proposition}\label{P:necessity}
Let $\Omega\subset\mathbb{C}$ be an open set. Suppose $\dbar$
has closed range in $L^{2}(\Omega)$.
Then $\rho_{\lcap}(\Omega)<\infty$, i.e.,
there exist positive constants $M$ and $\delta$ such that for
each $z\in\mathbb{C}$ there exists a
compact set
$K\subset\Omega^{c}$ such that
$$\lcap\left(K\cap\mathbb{D}(z,M)\right)\geq\delta.$$
\end{proposition}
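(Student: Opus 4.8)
The plan is to prove the contrapositive in the form of a reduction to the special case already handled by Lemma~\ref{L:specialcase}. Suppose $\rho_{\lcap}(\Omega)=\infty$. Then for every $M>0$ there is a sequence of points $\{w_{M,j}\}_{j}\subset\mathbb{C}$ and scalars $\delta_{M,j}\to 0$ with $\lcap\bigl(\Omega^{c}\cap\mathbb{D}(w_{M,j},M)\bigr)<\delta_{M,j}$. The obstruction to applying Lemma~\ref{L:specialcase} directly is condition (ii): we have no reason to expect the translated sets $\bigl(\Omega\cap\mathbb{D}(w_{M,j},M)\bigr)-w_{M,j}$ to form an \emph{increasing} sequence inside $\mathbb{D}(0,M)$. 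So the core of the argument is to extract, from the family of ``almost-filled'' discs, a nested exhaustion by rescaling and a diagonal/compactness argument.

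**First I would** set up the rescaled picture exactly as in Lemma~\ref{L:specialcase}: for fixed $M$, put $D_{M,j}=\{z\in\mathbb{D}(0,1): Mz+w_{M,j}\in\Omega\}$, so by Proposition~\ref{P:basicprops}(a),(b) each $D_{M,j}$ is a subset of $\mathbb{D}(0,1)$ with $\mathfrak{C}(D_{M,j})\le \mathfrak{C}(\Omega)/M$ (assuming $\dbar$ has closed range, i.e.\ $\mathfrak{C}(\Omega)=:C<\infty$), and $\lcap(\mathbb{D}(0,1)\setminus D_{M,j})\le \delta_{M,j}/M\to 0$. Now the point is that the family $\{D_{M,j}\}_{j}$ consists of open subsets of the fixed disc $\mathbb{D}(0,1)$ whose complements shrink to capacity zero. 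The plan is then to pass to a limit: using monotonicity and subadditivity of capacity (Theorem~5.1.2 in \cite{Rans95}), one can replace $D_{M,j}$ by the decreasing-complement union $E_{M,j}:=\bigcup_{i\ge j} D_{M,i}$ — but this need not be open nor have the right capacity behavior. The cleaner route is to work with the sets $G_{M,k}:=\mathbb{D}(0,1)\setminus \overline{\bigcup_{j\ge k}(\mathbb{D}(0,1)\setminus D_{M,j})}$ or, more robustly, to invoke the inner regularity of logarithmic capacity to find, for each $k$, a compact set $F_{M,k}\subset\bigcap_{j}D_{M,j}$ with $\lcap(\mathbb{D}(0,1)\setminus F_{M,k})$ small, so that the open sets $\mathbb{D}(0,1)\setminus F_{M,k}$ do what we want. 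I would then let $\widetilde D_{M}$ be (an open neighborhood version of) $\bigcup_k$ of these nested sets; by Proposition~\ref{P:closedness}, $\mathfrak{C}(\widetilde D_{M})=\lim_k \mathfrak{C}$ of the pieces $\le C/M$, while by Proposition~\ref{P:basicprops}(c), since $\mathbb{D}(0,1)\setminus\widetilde D_M$ is polar, $\mathfrak{C}(\widetilde D_M)=\mathfrak{C}(\mathbb{D}(0,1))$.

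**The hard part will be** this extraction step: arranging genuinely \emph{nested} open subsets of $\mathbb{D}(0,1)$ with polar (or arbitrarily small-capacity) complements out of the merely ``eventually large-capacity-complement-free'' family $\{D_{M,j}\}$, \emph{without} assuming the nestedness hypothesis (ii). The resolution I expect the authors use is to observe that capacity is inner regular on Borel (even analytic) sets and countably subadditive enough that $\bigcup_{j}D_{M,j}$ has complement of capacity zero when infinitely many $\delta_{M,j}\to 0$ — indeed $\mathbb{D}(0,1)\setminus\bigcup_j D_{M,j}\subset \mathbb{D}(0,1)\setminus D_{M,k}$ for every $k$, so its capacity is $\le \delta_{M,k}/M$ for all $k$, hence $0$. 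Thus $U_M:=\bigcup_j D_{M,j}$ is open with polar complement in $\mathbb{D}(0,1)$, so $\mathfrak{C}(U_M)=\mathfrak{C}(\mathbb{D}(0,1))$ by Proposition~\ref{P:basicprops}(c); and writing $U_M=\bigcup_N \bigl(\bigcup_{j\le N}D_{M,j}\bigr)$ as an increasing union of open sets, Proposition~\ref{P:closedness} gives $\mathfrak{C}(\mathbb{D}(0,1))=\mathfrak{C}(U_M)=\lim_N \mathfrak{C}(\bigcup_{j\le N}D_{M,j})$. Finally I would need monotonicity, Corollary~\ref{C:monotonicity}, plus $\mathfrak{C}(D_{M,j})\le C/M$ to bound $\mathfrak{C}(\bigcup_{j\le N}D_{M,j})$ — but a finite union of the $D_{M,j}$ need not have small constant unless the $D_{M,j}$ are nested, so in fact one \emph{does} need a nestedness device here, e.g.\ replacing $D_{M,j}$ by $D_{M,j}':=\bigcap_{i\ge j}D_{M,i}$ and noting $\mathbb{D}(0,1)\setminus D_{M,j}'=\bigcup_{i\ge j}(\mathbb{D}(0,1)\setminus D_{M,i})$ has capacity $0$ by the same argument, while $D_{M,j}'\subset D_{M,j+1}'$, and $\mathfrak{C}(D_{M,j}')\le\mathfrak{C}(D_{M,j})\le C/M$ by Corollary~\ref{C:monotonicity}. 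Then $\bigcup_j D_{M,j}'\subset U_M$ also has polar complement, so $\mathfrak{C}(\mathbb{D}(0,1))=\mathfrak{C}(\bigcup_j D_{M,j}')=\lim_j\mathfrak{C}(D_{M,j}')\le C/M$ by Proposition~\ref{P:closedness}. Letting $M\to\infty$ forces $\mathfrak{C}(\mathbb{D}(0,1))=0$, contradicting Remark~(i) (or the easy direct argument that $\|\nabla u\|=0$ for all $u\in\mathcal{C}_c^\infty$ is false). Hence $\rho_{\lcap}(\Omega)<\infty$, and unwinding the definition of capacity inradius gives the stated constants $M,\delta$ with a compact $K\subset\Omega^c$, using inner regularity of capacity once more to replace the Borel set $\Omega^c\cap\mathbb{D}(z,M)$ by a compact subset of nearly the same capacity.
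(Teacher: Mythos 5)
Your reduction to Lemma~\ref{L:specialcase} breaks down at the nestedness device, and the failure is not repairable by the tools you invoke. You correctly observe that $\bigcup_j D_{M,j}$ has polar complement in $\mathbb{D}(0,1)$ (its complement sits inside every single $\mathbb{D}(0,1)\setminus D_{M,k}$), and you correctly observe that monotonicity runs the wrong way to bound $\mathfrak{C}$ of a non-nested union. But your fix --- passing to $D_{M,j}'=\bigcap_{i\geq j}D_{M,i}$ and claiming its complement $\bigcup_{i\geq j}\bigl(\mathbb{D}(0,1)\setminus D_{M,i}\bigr)$ is polar ``by the same argument'' --- is false: that union is not contained in any single $\mathbb{D}(0,1)\setminus D_{M,k}$, and a countable union of sets whose capacities merely tend to $0$ can have capacity comparable to that of the full disc (the small exceptional sets can be scattered so as to cover $\mathbb{D}(0,1)$; indeed $\bigcap_{i\geq j}D_{M,i}$ can be empty). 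The same objection kills the final step, since $\mathbb{D}(0,1)\setminus\bigcup_j D_{M,j}'=\limsup_i K_{M,i}$ need not be polar either; and as a side issue $D_{M,j}'$ is only a $G_\delta$, not open. There is no hypothesis forcing the rescaled sets to increase, and that is precisely why the paper states Lemma~\ref{L:specialcase} only under the extra assumption (ii).

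The paper's actual proof of Proposition~\ref{P:necessity} sidesteps nestedness entirely: it works with each $D_j=\mathbb{D}(0,1)\setminus K_j$ \emph{individually}. Via Proposition~\ref{P:Poincare}, the bound $\mathfrak{C}(D_j)\leq C/M$ becomes a lower bound $\lambda_1(D_j)\geq 4M^2/C^2$ on the first Dirichlet eigenvalue, and the contradiction comes from showing $\lambda_1(D_j)\to\lambda_1(\mathbb{D}(0,1))$ as $\lcap(K_j)\to 0$. That convergence is proved by comparing the disc's first eigenfunction $\varphi$ with its harmonic extension $h_j$ from $bD_j$, and dominating $h_j$ by the normalized logarithmic potential $g_j=p_{\nu_j}/I(\nu_j)$ (suitably renormalized) of the equilibrium measure of $K_j$, whose $L^1(D_j)$-norm is $O(1/|\log\lcap(K_j)|)\to 0$. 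If you want to salvage your approach, you would need an eigenvalue-stability (or $\mathfrak{C}$-stability) statement for a \emph{single} domain under removal of a compact set of small capacity --- which is exactly the quantitative input the paper supplies and your proposal is missing.
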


\begin{lemma}\label{L:pwsmooth}
Let $K\subset\mathbb{C}$ be a compact set and $\epsilon>0$.
\begin{itemize}
  \item[(i)] There exists a compact set  
$K_{\epsilon}\subset\mathbb{C}$ such that $\mathbb{C}\setminus
K_{\epsilon}$ has smooth boundary,
$K\subset K_{\epsilon}$, and
$\lcap(K_{\epsilon})\leq\lcap(K)+\epsilon$.
\item[(ii)] Suppose that $K$ has smooth boundary and
$K\cap\mathbb{D}(0,1)$ is non-empty. Then
there exists a compact set
$K_{\epsilon}\subset\overline{\mathbb{D}(0,1)}$ such that
$K\cap\overline{\mathbb{D}(0,1)} \subset K_{\epsilon}$,
$\lcap(K_{\epsilon})\leq\lcap(K)+\epsilon$
and $\mathbb{D}(0,1)\setminus K_{\epsilon}$ is a smoothly
bounded set.
\end{itemize}  
  \end{lemma}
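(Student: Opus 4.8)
The plan is to prove both parts by approximating $K$ from the outside using a finite union of small closed squares (or dyadic cubes) from a fine grid, rounding off corners to obtain a smooth boundary, and controlling the capacity increment via the continuity-from-above property of logarithmic capacity on a decreasing sequence of compact sets. For part (i), I would first fix a large disc $\mathbb{D}(0,R)$ containing $K$ in its interior. For each $n$, let $G_n$ be the union of all closed dyadic squares of side $2^{-n}$ that intersect $K$; then $G_n$ is a compact set, $K \subset \operatorname{int} G_n$ (for the slightly enlarged version, or one passes to a closed neighborhood), and $G_1 \supset G_2 \supset \cdots$ with $\bigcap_n G_n = K$. By continuity from above of logarithmic capacity for decreasing sequences of compact sets (see \cite[Theorem 5.1.3]{Rans95}), $\lcap(G_n) \to \lcap(K)$, so choose $n$ with $\lcap(G_n) \le \lcap(K) + \tfrac{\epsilon}{2}$. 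The set $G_n$ has piecewise-linear boundary with corners; to smooth it, replace $G_n$ by a slightly larger compact set $K_\epsilon$ obtained by taking a $C^\infty$ regularization of the boundary (e.g. the sublevel set $\{ \operatorname{dist}(\cdot, G_n) \le \eta \}$ is not smooth either, so instead mollify the signed distance function to $G_n$ and take a regular sublevel set, which is a smooth compact manifold-with-boundary for $\eta$ small by Sard's theorem). For $\eta$ small enough this $K_\epsilon$ satisfies $G_n \subset K_\epsilon$ and, again by continuity from above applied to $\eta \downarrow 0$, $\lcap(K_\epsilon) \le \lcap(G_n) + \tfrac{\epsilon}{2} \le \lcap(K) + \epsilon$. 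Then $\mathbb{C} \setminus K_\epsilon$ has smooth boundary, as required.

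For part (ii), the only new issue is that the enlarged set must stay inside $\overline{\mathbb{D}(0,1)}$, so I cannot simply fatten $K$ freely near the unit circle. The plan is to set $K_\epsilon = \bigl(K \cup (\mathbb{C} \setminus \mathbb{D}(0,1))\bigr) \cap \overline{\mathbb{D}(0,1)}$ as a first attempt — but this includes the whole circle, which is fine for smoothness of the boundary of $\mathbb{D}(0,1) \setminus K_\epsilon$ only away from where $K$ meets the circle. Instead, since $K$ already has smooth boundary and $K \cap \mathbb{D}(0,1) \ne \emptyset$, I would work with $L := K \cap \overline{\mathbb{D}(0,1)}$ and enlarge $L$ only into the interior of the disc: take the grid-square outer approximation $G_n$ of $K$ as in part (i), intersect with $\overline{\mathbb{D}(0,1)}$, then push the portion of $\partial G_n$ lying inside $\mathbb{D}(0,1)$ outward by $\eta$ while leaving the part on or outside the circle pinned, and finally smooth. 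Concretely, define $K_\epsilon = \{ z \in \overline{\mathbb{D}(0,1)} : \operatorname{dist}(z, L) \le \eta \text{ or } |z| = 1 \text{-collar} \}$ and then mollify; the key point is that near $\partial\mathbb{D}(0,1)$ the set $\mathbb{D}(0,1)\setminus K_\epsilon$ either is empty or has its boundary coinciding with an arc of the smooth circle, so a local smoothing interpolating between the circle and the mollified inner boundary produces a smoothly bounded set. The capacity bound follows exactly as before: $\lcap(L) \le \lcap(K)$ by monotonicity, and $\lcap(K_\epsilon) \to \lcap(\overline{L}) \le \lcap(K)$ as $\eta \downarrow 0$ by continuity from above, so $\lcap(K_\epsilon) \le \lcap(K) + \epsilon$ for $\eta$ small.

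I expect the main obstacle to be the careful construction in part (ii) of a smoothing near the unit circle: one must simultaneously (a) keep $K_\epsilon \subset \overline{\mathbb{D}(0,1)}$, (b) ensure $\mathbb{D}(0,1) \setminus K_\epsilon$ has $C^\infty$ boundary \emph{including at points where that boundary transitions from an arc of $\partial \mathbb{D}(0,1)$ to the interior part}, and (c) not lose control of the capacity. The corner-rounding itself is standard (mollified distance functions plus Sard), and the capacity estimates are immediate from \cite[Theorem 5.1.3]{Rans95}; the delicate bookkeeping is purely the boundary regularity at the circle, which I would handle by arranging the smoothing region to be a thin collar and invoking a partition-of-unity gluing of defining functions. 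Everything else reduces to the two cited properties of logarithmic capacity — monotonicity and continuity along decreasing compact exhaustions.
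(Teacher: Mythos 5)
Your overall strategy --- outer-approximate $K$ with capacity control, then smooth the boundary --- is the same as the paper's, which invokes Choquet's outer regularity of $\lcap$ (essentially your continuity-from-above along decreasing compacts) to produce an open $U_{\epsilon}\supset K$ with $\lcap(U_{\epsilon})\leq\lcap(K)+\epsilon$ and then sandwiches a smoothly bounded set between $K$ and $U_{\epsilon}$. Part (i) of your argument is correct modulo standard smoothing details.

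Part (ii), however, contains a step that fails as written: both your ``first attempt'' and your ``concrete'' definition place all of $\partial\mathbb{D}(0,1)$ (respectively, a full annular collar of it) inside $K_{\epsilon}$. By monotonicity, any set containing the unit circle has logarithmic capacity at least $\lcap\left(\partial\mathbb{D}(0,1)\right)=1$, so the bound $\lcap(K_{\epsilon})\leq\lcap(K)+\epsilon$ is violated whenever $\lcap(K)+\epsilon<1$ --- and the application in Proposition \ref{P:necessity} is exactly in this regime, with $\lcap(K)\to 0$. Your limiting claim that $\lcap(K_{\epsilon})\to\lcap(\overline{L})$ as $\eta\downarrow 0$ is also false there: the decreasing intersection still contains the circle, so the limit is at least $1$. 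The repair is to keep everything you add inside the open set $U_{\epsilon}$ furnished by outer regularity: with $\eta_{0}:=\operatorname{dist}(K,\partial U_{\epsilon})>0$, confine the $\eta$-fattening of $L$ and all corner-rounding --- including the rounding at the points where the inner boundary meets $\partial\mathbb{D}(0,1)$, which all lie within distance $\eta$ of $K$ --- to the $\eta_{0}$-neighborhood of $K$. Then $K_{\epsilon}\subset U_{\epsilon}\cap\overline{\mathbb{D}(0,1)}$ and monotonicity alone yields the capacity bound, with no limit argument needed; this localized sandwich is precisely what the paper's (terse) proof of (ii) asserts via the intermediate open set $U'_{\epsilon}$.
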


\begin{proof}
(i) Let $K\subset\mathbb{C}$ be a compact set, $\epsilon>0$ be
given. Then, by a result due to
Choquet, there exists an open, bounded set $U_{\epsilon}$ such
that $K\subset U_{\epsilon}$ and
$\lcap(U_{\epsilon})\leq \lcap(K)+\epsilon$. Moreover, there
exists a smoothly bounded, open set
$U'_{\epsilon}$ such that $K\subset U'_{\epsilon}\subset
U_{\epsilon}$. Set
$K_{\epsilon}=\overline{U'}_{\epsilon}$. It then follows from
monotonicity of the logarithmic
capacity that $\lcap(K_{\epsilon})\leq\lcap(K)+\epsilon$.
  
(ii) Let $K\subset\mathbb{C}$ be a compact set with smooth
boundary such that
$K\cap\mathbb{D}(0,1)$ is non-empty. Let $\epsilon>0$ be given.
As in part (i) there exists an
open set $U_{\epsilon}$ such that $K\subset U_{\epsilon}$ and
$\lcap(U_{\epsilon})\leq\lcap(K)+\epsilon$.
 Then there exist an open set $U'_{\epsilon}$ such that 
$$K\cap\overline{\mathbb{D}(0,1)}\subset
U'_{\epsilon}\cap\overline{\mathbb{D}(0,1)}
 \subset U_{\epsilon}\cap\overline{\mathbb{D}(0,1)}$$
and $\mathbb{D}(0,1)\setminus \overline{U'_{\epsilon}}$ has
smooth boundary.
 With $K_{\epsilon}:=\overline{U'_{\epsilon}}$, (ii) follows.
\end{proof}

\medskip

\begin{proof}[Proof of Proposition \ref{P:necessity}]
The proof is done by contraposition, i.e., we assume that
$\dbar$ has closed range in
$L^{2}(\Omega)$ with
constant $C$ while $\rho_{\lcap}(\Omega)=\infty$. That is, we
assume that for
each $M, \delta>0$, there is a point $z_{M,\delta}\in\mathbb{C}$
such that for any compact set
 $K\subset\Omega^{c}$
$$\lcap\left(K\cap\mathbb{D}(z_{M,\delta},M)\right)<\delta. $$
Choose an $M>0$ such that 
\begin{align}\label{E:choiceofM}
  \frac{4M^{2}}{C^{2}}>\lambda_{1}(\mathbb{D}(0,1)).
\end{align}
Let $\{\delta_{j}\}_{j\in\mathbb{N}}$ be a sequence of positive
scalars with
$\lim_{j\to\infty}\delta_{j}=0$. For
$N>M$ and positive $\epsilon_{j}<\delta_{j}$ we may choose
$z_{N,\epsilon_{j}}$ such that any
compact set
contained in $ \Omega^{c}\cap\mathbb{D}(z_{N,\epsilon_{j}},N)$
has logarithmic capacity less than
$\epsilon_{j}$. For $j\in\mathbb{N}$, set
$$K_{j}^{1}=\Omega^{c}\cap\overline{\mathbb{D}(z_{N,\epsilon_{j}},M)}.$$
Then
$K_{j}^{1}\subset\overline{\mathbb{D}(z_{N,\epsilon_{j}},M)}$ is
compact and, by inner regularity
of the logarithmic capacity, see \cite[Theorem
5.1.2(b)]{Rans95},
$\lcap(K_{j}^{1})\leq\epsilon_{j}$ for all $j\in\mathbb{N}$.
By Lemma \ref{L:pwsmooth}, for any $j\in\mathbb{N}$ there exists
a compact set $K_{j}^{2}$ such
that
$\mathbb{D}(z_{N,\epsilon_{j}},M)\setminus K_{j}^{2}$ has smooth
boundary,
$K_{j}^{1}\subset K_{j}^{2}$ and $\lcap(K_{j}^{2})<\delta_{j}$. Now set 
 \begin{align*}
K_{j}&=\frac{1}{M}\left(K_{j}^{2}\cap\overline{\mathbb{D}(z_{N,\epsilon_{j}},M)}-z_{N,\epsilon_{j}}\right)\\
&=\left\{z\in\overline{\mathbb{D}(0,1)}:
Mz+z_{N,\epsilon_{j}}\in K_{j}^{2}
    \cap\overline{\mathbb{D}(z_{N,\epsilon_{j}},M)}
    \right\}.
 \end{align*}
Then $K_{j}\subset\overline{\mathbb{D}(0,1)}$ is compact such
that
 $\mathbb{D}(0,1)\setminus K_{j}$ has smooth boundary and 
$\lcap(K_{j})<\frac{\delta_{j}}{M}$ for $j\in\mathbb{N}$. Next, set
$D_{j}=\mathbb{D}(0,1)\setminus K_{j}$. Then $D_{j}$ is
open, has smooth boundary, and
$\mathfrak{C}(D_{j})\leq \frac{C}{M}$ for all $j\in\mathbb{N}$,
hence
 \begin{align}\label{E:eigenvalueestimate}
\lambda_{1}(D_{j})\geq\frac{4M^{2}}{C^{2}}>\lambda_{1}(\mathbb{D}(0,1))+\epsilon
 \end{align}
for some $\epsilon>0$ by \eqref{E:choiceofM}. In the following,
we will show that
 \begin{align}\label{E:eigenvaluelimit}
\lim_{j\to\infty}\lambda_{1}(D_{j})=\lambda_{1}(\mathbb{D}(0,1)). \end{align}
This would conclude the proof as \eqref{E:eigenvaluelimit} is a contradiction to
\eqref{E:eigenvalueestimate}.
 
Let $\varphi$ be an eigenfunction corresponding to the first
eigenvalue for the Dirichlet problem
in
$\mathbb{D}(0,1)$. Then
$\varphi\in\mathcal{C}^{\infty}(\overline{\mathbb{D}(0,1)})$,
see e.g.,
the remark following Theorem~1 in \cite[Section 6.5]{Evans98}.
Moreover, we may assume that
$0\leq\varphi\leq 1$ on $\mathbb{D}(0,1)$.
Next, let $h_{j}$ be the harmonic function in $D_{j}$ such that
$h_{j}=\varphi$ on $bD_{j}$. Since
$bD_{j}$
is smooth and the boundary data $\varphi$ is smooth up to the
boundary of $\mathbb{D}(0,1)$,
it follows that $h_{j}$ is smooth up to the boundary of $D_{j}$
as well.
 
 Set $\psi_{j}(z)=\varphi(z)-h_{j}(z)$ 
for $z\in\overline{D}_{j}$.Then
$\psi_{j}\in\mathcal{C}^{\infty}(\overline{D_{j}})$,
$\psi_{j}=0$
on $bD_{j}$ and
 $\triangle\psi_{j}=\triangle\varphi$ on $D_{j}$.
 By \eqref{E:RayleighLaplace}
 \begin{align*}
   0<\lambda_{1}(D_{j})
\leq
\frac{\|\triangle\psi_{j}\|_{L^{2}(D_{j})}}{\|\psi_{j}\|_{L^{2}(D_{j})}}.
 \end{align*}
It then follows from monotonicity of the first Dirichlet
eigenvalue, that
 \begin{align*}
   \frac{1}{\lambda_{1}(\mathbb{D}(0,1))}\geq
    \frac{1}{\lambda_{1}(D_{j})}
&\geq\frac{\|\psi_{j}\|_{L^{2}(D_{j})}}{\|\triangle\psi_{j}\|_{L^{2}(D_{j})}}\\
&=\frac{\|\varphi-h_{j}\|_{L^{2}(D_{j})}}{\|\triangle\varphi\|_{L^{2}(D_{j})}}\\
    &\geq
\frac{\|\varphi\|_{L^{2}(D_{j})}}{\lambda_{1}(\mathbb{D}(0,1))\cdot\|\varphi\|_{L^{2}(D_{j})}}-\frac{\|h_{j}\|_{L^{2}(D_{j})}}{\lambda_{1}(\mathbb{D}(0,1))\cdot\|\varphi\|_{L^{2}(D_{j})}}.
\end{align*}
As $\lcap(\mathbb{D}(0,1)\setminus D_{j})=\lcap(K_{j})$ goes to
zero as $j\to\infty$, it follows
that
$\|\varphi\|_{L^{2}(D_{j})}$ approaches
$\|\varphi\|_{L^{2}(D)}$. That is,
 \begin{align*}
1\geq
\frac{\lambda_{1}(\mathbb{D}(0,1))}{\lambda_{1}(D_{j})}\geq
1-\frac{\|h_{j}\|_{L^{2}(D_{j})}}{\|\varphi\|_{L^{2}(D_{j})}}. \end{align*}
To prove that \eqref{E:eigenvaluelimit} holds, it remains to
show that
 $\lim_{j\to\infty}\|h_{j}\|_{L^{2}(D_{j})}=0$.
Note first that the maximum principle yields
$0<h_{j}\leq 1$ on $D_{j}$, hence
$|h_{j}|^{2}\leq h_{j}$.
Moreover, if $g_{j}\in\mathcal{C}(\overline{D}_{j})$ is a
positive, harmonic function on $D_{j}$
such that $g_{j}=1$ on
$bK_{j}\setminus b\mathbb{D}(0,1)$ and $g_{j}\geq 0$ on
$b\mathbb{D}(0,1)\cap bD_{j}$, then
 $0<h_{j}\leq g_{j}$ on $D_{j}$. In particular,
$$\int_{D_{j}}|h_{j}|^{2}\;dA\leq
\int_{D_{j}}h_{j}\;dA\leq\int_{D_{j}}g_{j}\;dA.$$
So it suffices to show that there is such a sequence
$\{g_{j}\}_{j\in\mathbb{N}}$ whose
$L^{1}(D_{j})$-integral converges to $0$.
 
To construct such $g_{j}$, let $\nu_{j}$ be the equilibrium
measure for $K_{j}$, and set
$$J(\nu_{j})=\int_{\mathbb{C}}\int_{\mathbb{C}}\ln\left(\frac{2}{|z-w|}
\right)\;d\nu_{j}(w)\;d\nu_{j}(z).$$
Note that $J(\nu_{j})=\ln(2)-I(\nu_{j})$. Furthermore, as
$\lim_{j\to\infty}\lcap(K_{j})=0$, it
follows that
$\lim_{j\to\infty}I(\nu_{j})=-\infty$, hence
$\lim_{j\to\infty}J(\nu_{j})=\infty$. Hence for $j$
sufficiently large we may define
$$g_{j}(z)=\frac{1}{J(\nu_{j})}\int_{\mathbb{C}}\ln\left(\frac{2}{|z-w|}
\right)\;d\nu_{j}(w).$$
 
Then $g_{j}$ is a positive, harmonic function on $D_{j}$, which
is non-negative on
 $b\mathbb{D}(0,1)\cap bD_{j}$. We claim that  $g_{j}$
equals $1$ on $bK_{j}\setminus b\mathbb{D}(0,1)$ and is
continuous on $\overline{D_{j}}$. To show
the
former, we first note that any boundary point of $D_{j}$ is a
regular boundary point since any
smooth
defining function of $D_{j}$ serves as a subharmonic barrier
function, see \cite[Def.
4.1.4]{Rans95}. This
implies that the potential function $p_{j}$ associated to the
equilibrium measure $\nu_{j}$ of
$K_{j}$ is equal
to $I(\nu_{j})$ on $bK_{j}\setminus\mathbb{D}(0,1)$, see
\cite[Theorem 4.2.4]{Rans95}. However,
this implies
 that $g_{j}=1$ on
$bK_{j}\setminus b\mathbb{D}(0,1)$. It also implies that
$p_{j}\in\mathcal{C}(\overline{D}_{j})$,
see
\cite[Theorem 3.1.3]{Rans95}. Therefore,
$g_{j}\in\mathcal{C}(\overline{D}_{j})$. It remains to be
shown that
 $\int_{D_{j}}g_{j}\;dA$ converges to $0$ as $j\to\infty$.
 
 We compute
 \begin{align*}
   \int_{D_{j}}g_{j}(z)\;dA(z)
   &=\frac{1}{J(\nu_{j})}
\int_{D_{j}}\int_{\mathbb{C}}\ln\left(\frac{2}{|z-w|}\right)\;d\nu_{j}(w)\;dA(z)\\
&=\frac{1}{J(\nu_{j})}\int_{\mathbb{C}}\int_{D_{j}}\ln\left(\frac{2}{|z-w|}\right)\;dA(z)\;d\nu_{j}(w).
  \end{align*} 
  Note that
  \begin{align*}
    \int_{D_{j}}\ln\left(\frac{2}{|z-w|}\right)\;dA(z)
&\leq
\int_{\mathbb{D}(0,1)}\ln\left(\frac{2}{|z-w|}\right)\;dA(z)\\
&\leq\int_{\mathbb{D}(w,2)}\ln\left(\frac{2}{|z-w|}\right)\;dA(z)\\
&=\int_{\mathbb{D}(0,2)}\ln\left(\frac{2}{|z|}\right)\;dA(z)=2\pi.
  \end{align*}
Therefore $\lim_{j\to\infty}\int_{D_{j}}g_{j}(z)\;dA(z)=0$,
i.e.,
$\lim_{j\to\infty}\|h_{j}\|_{L^{2}(D_{j})}=0$, and hence
\eqref{E:eigenvaluelimit} holds, which
concludes the proof.
\end{proof}


\section{Proof of ``(3)$\Rightarrow$(4)'' of Theorem
\ref{T:main}}\label{S:sufficiency}

\begin{proposition}
Let $\Omega\subset\mathbb{C}$ be an open set. Suppose
$\rho_{\lcap}(\Omega)<\infty$, i.e., there
exist positive constants $M$ and $\delta$ such that for each
$z\in\mathbb{C}$ there exists a
compact set $K\subset\Omega^{c}$ such that
$$\lcap\left(K\cap\mathbb{D}(z,M)\right)\geq\delta.$$
Then there exists a bounded function
$\varphi\in\mathcal{C}^{\infty}(\Omega)$ and a positive
constant $c$ such that $\varphi_{z\bar{z}}(z)\geq c$ for all
$z\in\Omega$.
\end{proposition}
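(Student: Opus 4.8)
The plan is to construct $\varphi$ as a sum $\sum_{\zeta}\varphi_{\zeta}$ of building blocks attached to a lattice of discs, each block built from an equilibrium potential. First I would fix a lattice $\Lambda\subset\mathbb{C}$ with mesh so small that $\bigcup_{\zeta\in\Lambda}\mathbb{D}(\zeta,M)=\mathbb{C}$ with bounded overlap. For each $\zeta\in\Lambda$ the set $K_{\zeta}:=\Omega^{c}\cap\overline{\mathbb{D}(\zeta,M)}$ is compact, contained in $\Omega^{c}$, and by the hypothesis together with monotonicity of logarithmic capacity it satisfies $\lcap(K_{\zeta})\geq\delta$. Let $\mu_{\zeta}$ be its equilibrium measure and $p_{\zeta}=p_{\mu_{\zeta}}$ the associated potential. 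Then $p_{\zeta}$ is harmonic on $\mathbb{C}\setminus K_{\zeta}\supseteq\Omega$, Frostman's theorem gives $p_{\zeta}\geq\ln\lcap(K_{\zeta})\geq\ln\delta$ on $\mathbb{C}$, and since $K_{\zeta}\subseteq\overline{\mathbb{D}(\zeta,M)}$ one has the elementary two-sided bound $\ln(|z-\zeta|-M)\leq p_{\zeta}(z)\leq\ln(|z-\zeta|+M)$ for $|z-\zeta|>M$, together with $\ln\delta\le p_\zeta(z)\le\ln(2M)$ on $\mathbb{D}(\zeta,M)$. Thus each $p_{\zeta}$ is a harmonic function on $\Omega$ that is uniformly (in $\zeta$) comparable to $\ln|z-\zeta|$ away from $\mathbb{D}(\zeta,2M)$ and uniformly bounded on $\mathbb{D}(\zeta,M)$.

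Next, for each $\zeta$ set
$$ \varphi_{\zeta}(z)\;=\;\ln\!\bigl(|z-\zeta|^{2}+1\bigr)\;-\;2\,p_{\zeta}(z). $$
This choice has three virtues. (i) It is smooth on $\Omega$ and, since $p_{\zeta}$ is harmonic there, $\triangle\varphi_{\zeta}=\triangle\ln(|z-\zeta|^{2}+1)=\dfrac{4}{(|z-\zeta|^{2}+1)^{2}}>0$ on $\Omega$; in particular $\triangle\varphi_{\zeta}(z)\geq 4/(4M^{2}+1)^{2}=:c_{0}>0$ whenever $|z-\zeta|\leq 2M$. (ii) Combining $p_{\zeta}\geq\ln\delta$ with $\ln(|z-\zeta|-M)\le p_\zeta(z)$ for $|z-\zeta|>M$ and $p_{\zeta}(z)\leq\ln(|z-\zeta|+M)$, one checks that $\varphi_{\zeta}$ is bounded on all of $\mathbb{C}$ with bounds depending only on $M$ and $\delta$ — essentially because both $\ln(|z-\zeta|^{2}+1)-2\ln|z-\zeta|$ and $p_{\zeta}(z)-\ln|z-\zeta|$ are uniformly bounded. (iii) $\varphi_{\zeta}(z)\to 0$ as $|z-\zeta|\to\infty$. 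Now put $\varphi:=\sum_{\zeta\in\Lambda}\varphi_{\zeta}$. Granting that this sum converges locally uniformly on $\Omega$, $\varphi$ is a bounded function; the series of Laplacians $\sum_{\zeta}\triangle\varphi_{\zeta}(z)=\sum_{\zeta}4(|z-\zeta|^{2}+1)^{-2}$ converges locally uniformly (its terms are $O(|z-\zeta|^{-4})$, summable over a planar lattice) to a smooth function, so by elliptic regularity $\varphi\in\mathcal{C}^{\infty}(\Omega)$ and $\triangle\varphi=\sum_{\zeta}\triangle\varphi_{\zeta}$. Since every $z\in\Omega$ lies within distance $M$ of some $\zeta_{0}\in\Lambda$ and all terms have nonnegative Laplacian,
$$ \triangle\varphi(z)\;=\;\sum_{\zeta\in\Lambda}\triangle\varphi_{\zeta}(z)\;\geq\;\triangle\varphi_{\zeta_{0}}(z)\;\geq\;c_{0}\;>\;0, $$
so $\varphi_{z\bar z}=\tfrac14\triangle\varphi\geq c_{0}/4$, as required.

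The main obstacle is exactly the convergence claim for $\sum_{\zeta}\varphi_{\zeta}$. The tail of an individual block decays only like $\varphi_{\zeta}(z)=O(|z-\zeta|^{-1})$ — and at best $O(|z-\zeta|^{-2})$ after optimization — which is borderline non-summable over a planar lattice, yet one cannot compensate with summable weights $w_{\zeta}$, because controlling $\triangle\varphi$ near each $\zeta$ forces every weight to be bounded below. Overcoming this requires genuinely improving the decay: replacing $\zeta$ by the barycenter $b_{\zeta}$ of $\mu_{\zeta}$ (which lies in $\overline{\mathbb{D}(\zeta,M)}$) cancels the $|z-b_{\zeta}|^{-1}$ term in the expansion $p_{\zeta}(z)-\ln|z-b_{\zeta}|$, and one then has to group the summands, or tune the regularizing term $\ln(|z-b_\zeta|^{2}+1)$, so that the residual decay becomes summable with a bound uniform in $z$. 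An alternative route that sidesteps the lattice bookkeeping is to set $\varphi(z):=-\tfrac{c}{2\pi}\int_{\Omega}G_{\Omega}(z,w)\,dA(w)$, where $G_{\Omega}$ is the Green's function of $\Omega$ (which exists since $\Omega^{c}$ is non-polar); then $\triangle\varphi\equiv c$ on $\Omega$ and $\varphi\in\mathcal{C}^{\infty}(\Omega)$ by elliptic regularity, and the entire difficulty collapses to proving $\sup_{z}\int_{\Omega}G_{\Omega}(z,w)\,dA(w)<\infty$. That in turn follows from a geometric decay estimate $G_{\Omega}(z,w)\lesssim e^{-\alpha|z-w|}$, obtained by iterating across successive annuli of width $\sim M$ the fact that a compact subset of $\Omega^{c}$ of capacity $\geq\delta$ lying in such an annulus reduces the relevant harmonic measure — hence $G_{\Omega}$ — by a fixed factor $1-\eta(\delta,M)<1$.
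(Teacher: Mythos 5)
There is a genuine gap, and you have diagnosed it yourself: the series $\sum_{\zeta}\varphi_{\zeta}$ does not converge. With $\varphi_{\zeta}(z)=\ln(|z-\zeta|^{2}+1)-2p_{\zeta}(z)$ the tail is $O(|z-\zeta|^{-1})$, and even after recentering at the barycenter of $\mu_{\zeta}$ the term $\ln(1+|z-b_{\zeta}|^{-2})$ alone contributes a positive $|z-b_{\zeta}|^{-2}$, whose sum over a planar lattice diverges logarithmically; no grouping of summands can rescue a series of positive terms that diverges. As you also observe, summable weights are ruled out because the pointwise lower bound on $\triangle\varphi$ forces the weights to be bounded below. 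So the linear-in-$p_{\zeta}$ ansatz is structurally stuck, and neither of your two proposed repairs is carried out: the ``tune the regularizer'' step is not specified, and the Green's function route reduces the proposition to $\sup_{z}\int_{\Omega}G_{\Omega}(z,w)\,dA(w)<\infty$, i.e.\ boundedness of the torsion function under the capacity density condition, whose key annulus-iteration estimate you only assert. That estimate is essentially equivalent in difficulty to the implication being proved, so as written this is a reduction, not a proof.

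The missing idea, which is what the paper does, is to \emph{exponentiate}: take the building blocks to be $e^{-4p_{j,k}}$ rather than affine functions of $p_{j,k}$. Since $p_{j,k}(z)\sim\ln|z-\zeta_{j,k}|$ at infinity, $e^{-4p_{j,k}(z)}=O(|z-\zeta_{j,k}|^{-4})$, which is comfortably summable over a planar lattice, and the summands are positive and bounded, so $\varphi=\sum e^{-4p_{j,k}}$ is bounded and smooth with no delicate cancellation needed. The price is that $\triangle(e^{-4p})=16e^{-4p}|\nabla p|^{2}$ is no longer automatically bounded below near the corresponding block, since $\nabla p$ can vanish; the paper recovers the uniform lower bound by choosing, for each $z$, a block $K_{j_{1},k_{1}}$ displaced diagonally by two lattice steps, so that $\re(z-w)\geq M$ for every $w\in K_{j_{1},k_{1}}$ and the contributions to $\nabla p_{j_{1},k_{1}}(z)$ cannot cancel. (A further point your sketch glosses over: the hypothesis gives, for each $z$, \emph{some} compact $K\subset\Omega^{c}$ with $\lcap(K\cap\mathbb{D}(z,M))\geq\delta$; it does not directly say $\lcap(\Omega^{c}\cap\overline{\mathbb{D}(\zeta,M)})\geq\delta$ for every lattice point $\zeta$, since $\zeta$ may be far inside $\Omega^{c}$ or the relevant capacity may sit just outside the disc. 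The paper handles this with a short dichotomy argument producing sets $K_{j,k}\subset\mathbb{D}((2jM,2kM),M+2\delta)\cap\Omega^{c}$ of capacity at least $\delta$.)
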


\begin{proof}
We first observe that whenever there exists a compact, non-polar
set
$K\subset\Omega^{c}$, then $\Omega$ admits a non-constant,
bounded, real-analytic, subharmonic
function. In fact, let $\nu_{K}$ be the equilibrium measure of
$K$ such that $\supp(\nu_{K})\subset K$. 
The associated potential $p_{K}$ is given by
$$p_{K}(z)=\int_{\mathbb{C}}\ln|z-w|\;dv_{K}(w). $$
By Frostman's Theorem, $p_{K}(z)\geq\ln\left(\lcap(K)\right)$
for any $z\in\Omega$. Thus the
values of $e^{-p_{K}(z)}$ are in $(0,1/\lcap(K)]$. Moreover,
$p_{K}$ is harmonic, hence
real-analytic. As $p_{K}$ is also non-constant, it follows that
$e^{-p_{K}}$ is a non-constant,
bounded, real-analytic, subharmonic function on $\Omega$.

These kinds of functions will be the building blocks for 
the construction of $\varphi$. In 
fact, we will show that there exists a sequence
$\{K_{j}\}_{j\in\mathbb{N}}$ in
$\Omega^{c}$ and constants $c_{1}$, $c_{2}>0$ such that
\begin{itemize}
\item[(i)] $\varphi(z):=\sum_{j\in\mathbb{N}}e^{-4p_{K_{j}}(z)}$
is a
  smooth function on $\Omega$,
  \item[(ii)] $0\leq\varphi(z)\leq c_{1}$ for all $z\in\Omega$,
\item[(iii)] $\triangle\varphi(z)\geq c_{2}$ for all
$z\in\Omega$.
\end{itemize}

\medskip

Without loss of generality, we may assume that $2\delta<M$.
We claim that for each $(j,k)\in\mathbb{Z}\times\mathbb{Z}$, we
may choose a compact set $K_{j,k}$
such that $\lcap(K_{j,k})\geq \delta$ and
$$K_{j,k}\subset \mathbb{D}\left((2jM,2kM),M+2\delta
\right)\cap\Omega^{c}.$$ This can be seen as
follows. Suppose that for a given $(j,k)$ there was no such
compact set. If there exists a
$$z\in\Omega\cap\mathbb{D}\left((2jM,2kM),\delta\right),$$ then,
by hypothesis, the logarithmic
capacity of
$\overline{\mathbb{D}(z,M)}\cap\Omega^{c}$ is at least $\delta$.
This is a
contradiction to our assumption since
$$\overline{\mathbb{D}(z,M)}\cap\Omega^{c}\subset\mathbb{D}\left((2jM,2kM),M+2\delta
\right)\cap\Omega^{c}.$$ See Figure \ref{IIimpliesIII:fig}. Thus
$\Omega^{c}$ contains
$\overline{\mathbb{D}\left((2jM,2kM),\delta\right)}$ which is a
compact set of logarithmic
capacity $\delta$. This proves the claim.
\begin{figure}[h!t]
\includegraphics{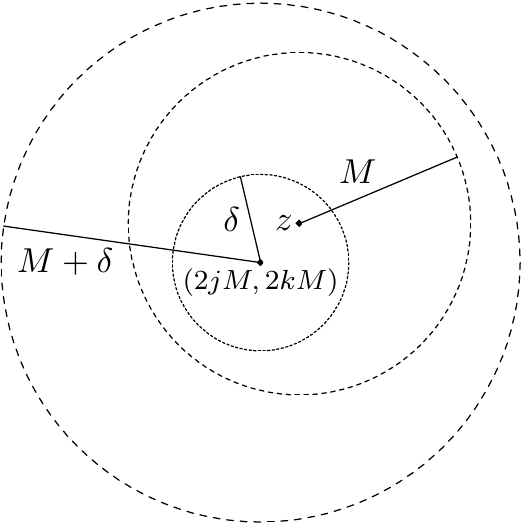}
\caption{The disc of radius $M+\delta$ contains the disc
of radius $M$ centered at $z$.\label{IIimpliesIII:fig}}
\end{figure}

For each $(j,k)\in\mathbb{Z}\times\mathbb{Z}$, choose a compact
set $K_{j,k}$ as described above.
Let $p_{K_{j,k}}$ be the associated potential; for the sake of
brevity, write $p_{j,k}$ in place
of $p_{K_{j,k}}$.
We shall show that the series 
$$\sum_{(j,k)\in\mathbb{Z}\times\mathbb{Z}}e^{-4p_{j,k}(z)}$$
converges for any $z\in\Omega$. To do this, we will fix a
$z\in\Omega$ and show convergence of the
series for a particular enumeration of
$\mathbb{Z}\times\mathbb{Z}$. As the terms of the series
are non-negative, it will then follow that the series converges
(to the same value) for any choice
of enumeration.

For given $z\in\mathbb{C}$, write $\mathcal{Q}(z,L)$ for the
closed square with center $z$
and side length $2L$.
For fixed $z\in\Omega$, let
$(j_{0},k_{0})\in\mathbb{Z}\times\mathbb{Z}$ such that
$z\in\mathcal{Q}\left((2j_{0}M,2k_{0}M),M\right)$. For
$\lambda\in\mathbb{N}$, set
$$A_{\lambda}=\left\{(j,k)\in\mathbb{Z}\times\mathbb{Z}:
\max\{|j-j_{0}|,|k-k_{0}|\right\}=\lambda\}.$$
A straightforward computation yields that
$\card(A_{\lambda})=8\lambda$ for
$\lambda\geq 1$. Next note that
$$p_{j,k}(z)\geq\ln\delta\;\;\sjump\forall\;(j,k)\in A_{1}.$$
Furthermore, if
$(j,k)\in A_{\lambda}$ for some integer $\lambda\geq 2$ and
$w\in K_{j,k}$,
then $|z-w|\geq \left(\lambda-1\right)M$. Hence
$$p_{j,k}(z)\geq \ln\left(\left(\lambda-1\right)M\right)
\int_{\mathbb{C}}1\;dv_{j,k}(w)=\ln\left(\left(\lambda-1\right)M\right).$$
Therefore,
\begin{align*}
  \sum_{\lambda=1}^{\infty}\sum_{(j,k)\in A_{\lambda}}
  e^{-4p_{j,k}(z)}
&\leq\card(A_{1})\delta^{-4}+\sum_{\lambda=2}^{\infty}\card(A_{\lambda})
  \frac{1}{M^{4}(\lambda-1)^{4}}\\
  &=8 \delta^{-4}+\frac{8}{M^{4}}\sum_{\lambda=2}^{\infty}
  \frac{\lambda}{(\lambda-1)^{4}}.
\end{align*}
Hence, by the Weierstra{\ss} $M$-test, the series is uniformly
convergent on $\Omega$. In
particular, $\varphi(z):=\sum_{(j,k)\in\mathbb{Z}\times\mathbb{Z}}e^{-4p_{j,k}(z)}$
is a well-defined function
which is continuous and bounded on $\Omega$.

To see that $\varphi$ is in fact in
$\mathcal{C}^{\infty}(\Omega)$, it suffices to note that
all derivatives of the $p_{j,k}$'s are locally, uniformly
bounded.
For instance, after computing
$$
\frac{\partial }{\partial
z}\left(p_{j,k}(z)\right)=\frac{1}{2}\int_{\mathbb{C}}
\frac{1}{z-w}\;dv_{j,k}(w),
$$
we note that for any $z_{0}\in\Omega$, there exists a
neighborhood $U_{z_{0}}\subset\Omega$ of $z_{0}$
and a constant $c_{z_{0}}>0$, such that
$$|z-w|>c\sjump\;\;\forall\;z\in
U_{z_{0}},\;\forall\;w\in\Omega^{c}.$$
Hence for $z\in U_{z_{0}}$, it follows that
\begin{align*}
\Bigl|\sum_{(j,k)\in\mathbb{Z}\times\mathbb{Z}}\frac{\partial}{\partial
z}
  \left(e^{-4p_{j,k}(z)} \right)\Bigr|&
  =4\Bigl|\sum_{(j,k)\in\mathbb{Z}\times\mathbb{Z}}
e^{-4p_{j,k}(z)}\frac{\partial}{\partial
z}\left(p_{j,k}(z)\right)\Bigr|\\
  &\leq
2c_{z_{0}}\sum_{(j,k)\in\mathbb{Z}\times\mathbb{Z}}e^{-4p_{j,k}(z)}
  =2c_{z_{0}}\varphi(z),
\end{align*}  
i.e., the series 
$\sum_{(j,k)\in\mathbb{Z}\times\mathbb{Z}}\frac{\partial}{\partial
z}
\left(e^{-4p_{j,k}(z)} \right)$ 
converges locally uniformly. Thus it converges to
$\frac{\partial\varphi}{\partial z}$.
Similarly, if $D$ represents any differential operator, the
series
$$\sum_{(j,k)\in\mathbb{Z}\times\mathbb{Z}} D\left(e^{-4
p_{j,k}(z)}\right)$$
is convergent locally uniformly. In particular, the series
converges to $D\varphi$, and
$\varphi\in\mathcal{C}^{\infty}(\Omega)$.

The last observation yields in particular
$$\triangle\varphi(z)=16\sum_{(j,k)\in\mathbb{Z}\times\mathbb{Z}}e^{-4p_{j,k}(z)}
\left|\nabla
p_{j,k}(z)\right|^{2}\;\;\sjump\forall\;z\in\Omega,$$ i.e.,
$\varphi$ is subharmonic on $\Omega$. It remains to be shown
that $\varphi$ satisfies condition
(iii). For that, it suffices to show that there exists a
constant $c>0$ such that for each
$z\in\Omega$ there exists a $(j,k)\in\mathbb{Z}\times\mathbb{Z}$
such that
$$e^{-p_{j,k}(z)}\left|\nabla p_{j,k}(z) \right|^{2}\geq c.$$
A straightforward computation yields 
\begin{align*}
  \left|\nabla p_{j,k}(z)\right|&=\frac{1}{2}
\left|\int_{\mathbb{C}}\frac{\re(\bar{z}-\bar{w})+i\im(\bar{z}-\bar{w})}{|z-w|^{2}}
  dv_{j,k}(w)\right|\\
  &\geq 
\frac{1}{2}\left|\int_{\mathbb{C}}\frac{\re(z-w)}{|z-w|^{2}}  dv_{j,k}(w) \right|
\end{align*}
for any given $z\in\Omega$ and
$(j,k)\in\mathbb{Z}\times\mathbb{Z}$. Now let $z\in\Omega$ be
given. Then, as before, $z\in\mathcal{Q}((2j_{0}M,2k_{0}M),M)$
for some
$(j_{0},k_{0})\in\mathbb{Z}\times\mathbb{Z}$. Let
$(j_{1},k_{1})=(j_{0}-2,k_{0}-2)$. Note that for
any $w\in K_{j_{1},k_{1}}$
$$
\re(z-w)=\re(z)-\re(w)\geq
M\;\;\text{and}\;\;\sqrt{2}M<|z-w|<\sqrt{98}M
$$
holds. Thus $|\nabla p_{j_{1},k_{1}}(z)|\geq\frac{1}{2\cdot
98M}$.
Moreover,  $e^{-4p_{j_{1},k_{1}}(z)}\geq \frac{1}{2}M^{-4}$. 
Hence, $\triangle\varphi(z)\geq \frac{2}{49 M^{5}}$ for all
$z\in\Omega$.
See Figure \ref{IIimpliesIII-2:fig}.
\begin{figure}[h!t]
\includegraphics{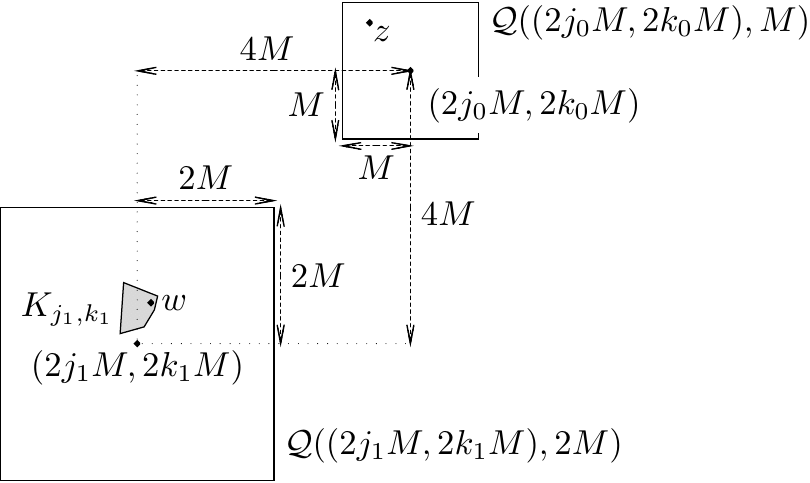}
\caption{The squares at $(2j_0M,2k_0M)$ and
$(2j_1M,2k_1M)$.\label{IIimpliesIII-2:fig}}
\end{figure}
\end{proof}


\section{Proof of ``(3)$\Rightarrow$(4)'' of Theorem
\ref{T:Bergmanspace}}\label{S:Bergmanspace}
\begin{proposition}
Let $\Omega\subset\mathbb{C}$ be an open set such that
$\lcap(\Omega^{c})>0$. Then there exists a bounded
$\varphi\in\mathcal{C}^{\infty}(\Omega)$ such that
$\triangle\varphi>0$ on $\Omega$.
\end{proposition}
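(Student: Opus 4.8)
The plan is to mimic the construction used in the proof of ``(3)$\Rightarrow$(4)'' of Theorem~\ref{T:main}, but in the much easier setting where we only need one compact set rather than a translation-invariant family of them. Since $\lcap(\Omega^c)>0$, the complement contains a compact non-polar set $K$: indeed, by definition of logarithmic capacity there is a finite Borel probability measure $\mu$ with compact support $K\subset\Omega^c$ and $I_\mu>-\infty$; the set $K$ is then compact and non-polar, and we may take $\nu_K$ to be its equilibrium measure with $\supp(\nu_K)\subset K$. As observed at the start of the proof of the previous proposition, the potential $p_K(z)=\int_{\mathbb C}\ln|z-w|\,d\nu_K(w)$ is harmonic (hence real-analytic) on $\Omega\subset K^c$, is non-constant, and satisfies $p_K(z)\ge\ln(\lcap(K))$ for all $z\in\Omega$ by Frostman's theorem.

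First I would set $\varphi(z):=e^{-p_K(z)}$. The lower bound on $p_K$ gives $0<\varphi(z)\le 1/\lcap(K)$ on $\Omega$, so $\varphi$ is bounded, and since $p_K$ is real-analytic on $\Omega$ so is $\varphi$; in particular $\varphi\in\mathcal C^\infty(\Omega)$. Next I would compute the Laplacian: since $p_K$ is harmonic, $\triangle\big(e^{-p_K}\big)=e^{-p_K}\big(|\nabla p_K|^2-\triangle p_K\big)=e^{-p_K}|\nabla p_K|^2\ge 0$, so $\varphi$ is subharmonic on $\Omega$. The only thing that must be ruled out is that $\triangle\varphi$ vanishes somewhere, i.e.\ that $\nabla p_K(z_0)=0$ at some $z_0\in\Omega$; this is exactly where a bit of care is needed and is the main (mild) obstacle.

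To handle that, I would not insist on $\triangle\varphi>0$ at every point of $\Omega$ for the single function $e^{-p_K}$, but instead add a correction. The set $Z=\{z\in\Omega:\nabla p_K(z)=0\}$ is the zero set of the real-analytic, non-constant gradient $\nabla p_K$ (which is not identically zero because $p_K$ is a non-constant harmonic function), hence $Z$ is a discrete subset of $\Omega$ with no accumulation point in $\Omega$. Now simply set $\varphi(z):=e^{-p_K(z)}+\epsilon|z|^2$ for a small constant $\epsilon>0$; wait — $|z|^2$ is not bounded on $\Omega$ in general, so instead I would write $\varphi(z):=e^{-p_K(z)}-\epsilon\, e^{-2p_K(z)}$. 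A direct computation, again using $\triangle p_K=0$, gives
\begin{align*}
\triangle\varphi(z)&=e^{-p_K(z)}|\nabla p_K(z)|^2+\epsilon\big(-4e^{-2p_K(z)}|\nabla p_K(z)|^2+\text{lower order}\big),
\end{align*}
which is not obviously positive either, so the cleanest route is the following. Cover $\Omega$ by a locally finite family of relatively compact open sets, and on each one the function $e^{-p_K}$ already has $\triangle(e^{-p_K})>0$ except on the discrete set $Z$; near each point of $Z$, choose local holomorphic coordinates and add a term of the form $\epsilon\,\re(h)$... this is getting complicated.

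Let me restate the clean plan: I would instead note that because $p_K$ is non-constant harmonic, $e^{-tp_K}$ is subharmonic for every $t>0$ with $\triangle\big(e^{-tp_K}\big)=t^2e^{-tp_K}|\nabla p_K|^2$, and the zero set $Z$ of $\nabla p_K$ is discrete in $\Omega$ and the same for all $t$. So take any auxiliary bounded strictly subharmonic function on a neighborhood of $Z$ — for instance, near an isolated point $a\in Z$ the function $p_K-p_K(a)$ has an expansion $\re\big(c(z-a)^m\big)+O(|z-a|^{m+1})$ with $c\ne 0$ and $m\ge 1$ (the gradient vanishing means $m\ge 2$ when the linear term is absent; in any case $m$ is finite), and one can build, via a partition of unity subordinate to a small disc $\mathbb D(a,r_a)\Subset\Omega$ containing no other point of $Z$, a smooth bounded function $\chi_a\psi_a$ with $\psi_a$ strictly subharmonic on $\mathbb D(a,r_a)$. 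Summing $\varphi:=e^{-p_K}+\sum_{a\in Z}\epsilon_a\chi_a\psi_a$ with the $\epsilon_a>0$ chosen small enough that the perturbation does not destroy $\triangle\varphi>0$ on the (compact, after shrinking) support of each $\chi_a$, and small enough in sum that $\varphi$ stays bounded, yields a bounded $\varphi\in\mathcal C^\infty(\Omega)$ with $\triangle\varphi>0$ everywhere on $\Omega$. The main obstacle, then, is purely the local one of killing the isolated critical points of $p_K$, and it is resolved by a standard partition-of-unity perturbation; everything else (boundedness, smoothness, subharmonicity of the leading term) is immediate from Frostman's theorem and harmonicity of $p_K$, exactly as in the proof of the preceding proposition.
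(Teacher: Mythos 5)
Your first half matches the paper exactly ($K$, the equilibrium potential $p_K$, Frostman's lower bound, $\varphi=e^{-p_K}$ bounded and smooth with $\triangle\varphi=e^{-p_K}|\nabla p_K|^2\ge 0$), and you correctly identify the real issue as the possible vanishing of $\nabla p_K$. But your resolution rests on a claim that is false in general: that the critical set $Z=\{z\in\Omega:\nabla p_K(z)=0\}$ is discrete. The function $2\,\partial p_K/\partial z$ is the Cauchy transform $\int (z-w)^{-1}\,d\nu_K(w)$, holomorphic on $(\supp\nu_K)^c$; its zero set is discrete only on those connected components of $K^{c}$ where it is not identically zero. On a bounded component of $K^{c}$ (a ``hole'' of $K$) it can vanish identically. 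Concretely, if $K=\{1\le|z|\le 2\}$ and $\Omega=\{|z|<1\}\cup\{|z|>2\}$, the equilibrium measure of $K$ is the uniform measure on $\{|z|=2\}$, so $p_K\equiv\ln 2$ on the disc $\{|z|<1\}$ and $\triangle(e^{-p_K})\equiv 0$ on an entire component of $\Omega$. There $Z$ is open, not discrete, and no perturbation localized near isolated critical points can produce strict subharmonicity; your argument (``$p_K$ is non-constant harmonic, hence $\nabla p_K$ has discrete zero set'') breaks down because non-constancy holds only on the unbounded component of $K^c$.

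The paper's proof is designed precisely to avoid this: instead of analyzing $Z$ pointwise, it proves via a geometric (subtended-angle) estimate that $|\partial p_K/\partial z|>0$ on $\Omega\cap B^{c}$ for a single large ball $B\supset K$, and then adds one compactly supported perturbation $\epsilon\,\chi(|z-z_0|^2)$ which equals $\epsilon|z-z_0|^2$ on all of $B$ (hence contributes $4\epsilon$ to the Laplacian on every point of $\Omega\cap B$, holes included) and whose transition region lies in a set where $\triangle(e^{-p_K})$ has a positive lower bound. Your argument becomes correct if you replace the discreteness claim by this statement --- $\nabla p_K\neq 0$ outside some ball containing $K$, which also follows from the asymptotics $\partial p_K/\partial z\sim 1/(2z)$ at infinity together with the angle argument for moderate $|z|$ --- and then use a single global bump over that ball rather than a partition of unity over $Z$. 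As written, the proposal has a genuine gap.
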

\begin{proof}
Note that there exists a compact set $K\subset\Omega^{c}$ such
that $\lcap(K)>0$. Let $\nu$ be the
equilibrium measure of $K$, and $p$ be the associated potential
function, i.e.,
  $$p(z)=\int_{C}\ln|z-w|\;d\nu(w).$$
Recall that $p$ is harmonic in $\Omega$ and
$p\geq\ln(\lcap(K))$. Hence $\varphi:=e^{-p}$ is
smooth and
bounded on $\Omega$. Moreover, $\triangle\varphi=e^{-p}|\nabla
p|^{2}$.
  
We shall show first that there is an open ball $B$ such that
$K\subset B$ and
$|\frac{\partial p}{\partial z}(z)|>0$ for all $z\in
B^{c}\cap\Omega$.
Let $B$, $B'$ be concentric open balls, such that $K\subset B'$,
and the radius of $B$ is
  twice the radius of $B'$.
  Let $z\in\Omega\cap B^{c}$ be given, 
  $\theta\in[0,2\pi)$ fixed and chosen later.
  See Figure \ref{angle:fig}.
  Then
  \begin{align*}
    \left|\frac{\partial p}{\partial z}(z) \right|
&=\frac{1}{2}\left|\int_{\mathbb{C}}\frac{\bar{z}-\bar{w}}{|z-w|^{2}}e^{-i\theta}
\;d\nu(w)\right|\\
&\geq\frac{1}{2}\left|\int_{\mathbb{C}}\frac{\re\left((z-w)e^{-i\theta}
\right)}{|z-w|^{2}}\;d\nu(w) \right|\\
&=\frac{1}{2}\left|\int_{\mathbb{C}}\frac{\cos(\alpha(z-w)-\theta)}{|z-w|}\;d\nu(w)
\right|,
  \end{align*}
where $\alpha(z-w)$ is the branch of the argument of $z-w$ in
$[0,2\pi)$. Since the radius of $B$
is
twice the radius of $B'$, it follows that there is some
$c\in(0,\pi/2)$ such that
$$\left|\alpha(z-w)-\alpha(z-w^{*})
\right|<c\;\;\sjump\forall\;w, w^{*}\in K.$$
This allows us to choose $\theta$ such that
$\alpha(z-w)-\theta\in[0,c)\subset[0,\pi)$ for all
$w\in K$. Hence
$\cos(\alpha(z-w)-\theta)>0$ for all $w\in K$. Thus
$|\frac{\partial p}{\partial z}(z)|>0$ for all $z\in\Omega\cap
B^{c}$.
\begin{figure}[h!t]
\includegraphics{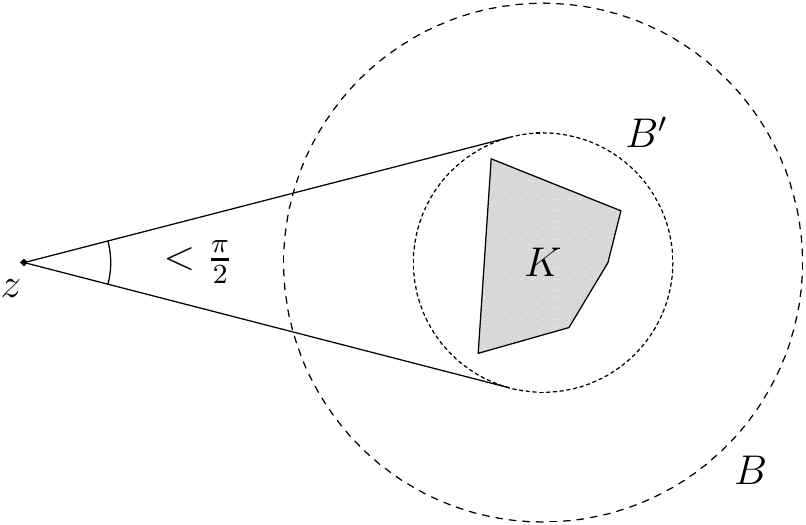}
\caption{From $z$ outside $B$, the disc $B'$, and hence $K$,
subtends an angle less than
$\frac{\pi}{2}$.\label{angle:fig}}
\end{figure}
  
Let $B''$ be an open set containing $\overline{B}$. Then, since
$p$ is smooth in
  $\Omega$ and $|\nabla p|>0$ on $\Omega\cap B^{c}$ 
there exists a constant $C>0$ such that $\triangle\varphi>C$
on$\overline{\Omega}\cap\left(\overline{B''}\setminus B\right)$.
Let $z_{0}$ be the center of $B'$,
let
$\chi\in\mathcal{C}^{\infty}_{c}(\mathbb{R})$ such that
$\chi(|z-z_{0}|^{2})=|z-z_{0}|^{2}$ for
all $z\in B$ and
  $\text{supp}(\chi(|z-z_{0}|^{2}))\subset B''$.
Then $\varphi+\epsilon\chi(|z-z_{0}|^{2})$ is a smooth, bounded
function on $\Omega$ which is
strictly
subharmonic everywhere as long as $\epsilon>0$ is sufficiently
small.
\end{proof}

\bibliographystyle{acm}
\bibliography{References}

\end{document}